\theoremstyle{plain}
\newtheorem{thm}{Theorem}
\crefname{thm}{Theorem}{Theorems}
\theoremstyle{plain}
\newtheorem{lem}[thm]{Lemma}
\crefname{lem}{Lemma}{Lemmas}
\theoremstyle{plain}
\theoremstyle{plain}
\newtheorem*{claim*}{Claim}
\theoremstyle{definition}
\theoremstyle{plain}
\theoremstyle{plain}
\theoremstyle{definition}
\theoremstyle{definition}
\theoremstyle{plain}
\date{}
\crefname{appsec}{Appendix}{Appendices}
\crefname{enumi}{condition}{conditions}
\Crefname{enumi}{Condition}{Conditions}
\let\originalleft\left
\let\originalright\right
\renewcommand{\left}{\mathopen{}\mathclose\bgroup\originalleft}
\renewcommand{\right}{\aftergroup\egroup\originalright}
\renewcommand*{\UrlTildeSpecial}{%
  \do\~{%
    \mbox{%
      \fontfamily{ptm}\selectfont
      \textasciitilde
    }%
  }%
}%
\let\Url@force@Tilde\UrlTildeSpecial
\tikzstyle{vertex}=[circle,draw=black,fill=black,inner sep=0,minimum size=0.2cm,text=white,font=\footnotesize]
\tikzset{every loop/.style={min distance=50,in=50,out=130,looseness=7}}
\global\long\def\mk#1{\textcolor{blue}{\textbf{[MK comments:} #1\textbf{]}}}
\def\mk#1{}
\begin{document}

\title{Ramsey graphs induce subgraphs of quadratically many sizes}

\author{Matthew Kwan \thanks{Department of Mathematics, ETH, 8092 Z\"urich. Email: \href{mailto:matthew.kwan@math.ethz.ch} {\nolinkurl{matthew.kwan@math.ethz.ch}}.}\and
Benny Sudakov\thanks{Department of Mathematics, ETH, 8092 Z\"urich, Switzerland. Email:
\href{mailto:benjamin.sudakov@math.ethz.ch} {\nolinkurl{benjamin.sudakov@math.ethz.ch}}.
Research supported in part by SNSF grant 200021-175573.}}

\maketitle
\global\long\def\RR{\mathbb{R}}
\global\long\def\QQ{\mathbb{Q}}
\global\long\def\E{\mathbb{E}}
\global\long\def\Var{\operatorname{Var}}
\global\long\def\CC{\mathbb{C}}
\global\long\def\NN{\mathbb{N}}
\global\long\def\ZZ{\mathbb{Z}}
\global\long\def\one{\mathbbm{1}}
\global\long\def\floor#1{\left\lfloor #1\right\rfloor }
\global\long\def\ceil#1{\left\lceil #1\right\rceil }
\global\long\def\cond{\,\middle|\,}

\global\long\def\mk#1{\textcolor{red}{\textbf{[MK comments:} #1\textbf{]}}}

\begin{abstract}
An $n$-vertex graph is called \emph{$C$-Ramsey} if it has no clique
or independent set of size $C\log n$. All known constructions of
Ramsey graphs involve randomness in an essential way, and there is
an ongoing line of research towards showing that in fact all Ramsey
graphs must obey certain ``richness'' properties characteristic
of random graphs. Motivated by an old problem of Erd\H os and McKay,
recently Narayanan, Sahasrabudhe and Tomon conjectured that for any
fixed $C$, every $n$-vertex $C$-Ramsey graph induces subgraphs
of $\Theta\left(n^{2}\right)$ different sizes. In this paper we prove
this conjecture.
\end{abstract}

\section{Introduction}

An induced subgraph of a graph is said to be \emph{homogeneous }if
it is a clique or independent set. A classical result in Ramsey theory,
proved in 1935 by Erd\H os and Szekeres \cite{ES35}, is that every
$n$-vertex graph has a homogeneous subgraph with at least $\frac{1}{2}\log_{2}n$
vertices. On the other hand, Erd\H os \cite{Erd47} famously used
the probabilistic method to prove that there exists an $n$-vertex graph with no
homogeneous subgraph on $2\log_{2}n$ vertices. Despite significant
effort (see for example \cite{FW81,BRSW12,Coh16,CZ16}), there are
no non-probabilistic constructions of graphs with comparably small
homogeneous sets. 

For some fixed $C$, say a graph is \emph{$C$-Ramsey} if it has no
homogeneous subgraph of size $C\log_{2}n$. It is widely believed
that $C$-Ramsey graphs must in some sense resemble random graphs,
and this belief has been supported by a number of theorems showing
that certain ``richness'' properties characteristic of random graphs
hold for all $C$-Ramsey graphs. The first result of this type was
due to Erd\H os and Szemer\'edi \cite{ES72}, who showed that $C$-Ramsey
graphs have density bounded away from 0 and 1. Further research has
focused on showing that certain statistics or substructures can take
many different values. Improving a result of Erd\H os and Hajnal
\cite{EH77}, Pr\"omel and R\"odl \cite{PR99} proved that for every
constant $C$ there is $c>0$ such that every $n$-vertex $C$-Ramsey
graph contains every possible graph on $c\log_{2}n$ vertices as an
induced subgraph. Shelah \cite{She98} proved that every $n$-vertex
$C$-Ramsey graph contains $2^{\Omega\left(n\right)}$ non-isomorphic
induced subgraphs. Fairly recently, answering a question of Erd\H os,
Faudree and S\'os \cite{Erd92,Erd97}, Bukh and Sudakov \cite{BS07}
showed that every $n$-vertex $C$-Ramsey graph has an induced subgraph
with $\Omega\left(\sqrt{n}\right)$ different degrees.

Two significant open problems in this area concern variation in the
numbers of edges and vertices in induced subgraphs. For a graph $G$,
let 
\begin{align*}
\Phi\left(G\right) & =\left\{ e\left(H\right):H\text{ is an induced subgraph of }G\right\} ,\\
\Psi\left(G\right) & =\left\{ \left(v\left(H\right),e\left(H\right)\right):H\text{ is an induced subgraph of }G\right\} .
\end{align*}

Erd\H os and McKay \cite{Erd92,Erd97} conjectured that for any $C$
there is $\delta>0$ such that for every $n$-vertex $C$-Ramsey graph
$G$, the set $\Phi\left(G\right)$ contains the interval $\left\{ 0,\dots,\delta n^{2}\right\} $.
Erd\H os, Faudree and S\'os \cite{Erd92,Erd97} conjectured that
for any fixed $C$ and any $n$-vertex $C$-Ramsey graph $G$, we
have $\left|\Psi\left(G\right)\right|=\Omega\left(n^{5/2}\right)$.
The best progress on the former conjecture is due to Alon, Krivelevich
and Sudakov \cite{AKS03}, who proved it with $n^{\delta}$ in place
of $\delta n^{2}$. The best progress on the latter conjecture is
due to Alon, Balogh, Kostochka and Samotij \cite{ABKS09} (improving
work of Alon and Kostochka \cite{AK09}), who proved it with $2.369$
in place of $5/2$. We also remark that strengthenings of both these
conjectures have been shown to hold for random graphs \cite{CFM92,AK09}.

Recently, Narayanan, Sahasrabudhe and Tomon \cite{NST16} proposed
a natural weakening of the aforementioned Erd\H os\textendash McKay
conjecture in the spirit of the Erd\H os\textendash Faudree\textendash S\'os
conjecture. Specifically, they conjectured that $\left|\Phi\left(G\right)\right|=\Omega\left(n^{2}\right)$
for every $n$-vertex $C$-Ramsey graph $G$, and proved the weaker
result that $\left|\Phi\left(G\right)\right|=n^{2-o\left(1\right)}$
(to be precise, they explain that their methods actually give a bound
of the form $n^{2}/e^{\Theta\left(\sqrt{\log n}\right)}$). In this
paper we prove Narayanan, Sahasrabudhe and Tomon's conjecture.
\begin{thm}
\label{conj:NST}For any fixed $C$, and any $n$-vertex $C$-Ramsey
graph $G$, we have $\left|\Phi\left(G\right)\right|=\Omega\left(n^{2}\right)$.
\end{thm}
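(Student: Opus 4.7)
My plan is to realise $\Omega(n^{2})$ distinct edge counts by locating inside $G$ a fixed ``base'' induced subgraph $B$ together with a disjoint ``variable'' set $W$ of size $\Omega(n)$ whose cross-degrees $d_{B}(w):=|N(w)\cap B|$ have strong additive structure. For each subset $S\subseteq W$ the induced subgraph $G[B\cup S]$ has
\[
e(G[B\cup S]) \;=\; e(G[B]) \;+\; \sum_{w\in S} d_{B}(w) \;+\; e(G[S]),
\]
so the first term is a harmless constant and the task is to show that as $S$ ranges over a suitable family of subsets of $W$, the quantity $\sum_{w\in S} d_{B}(w)+e(G[S])$ attains $\Omega(n^{2})$ different values.

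The central step is to arrange for the multiset $\{d_{B}(w):w\in W\}$ to contain an arithmetic progression of length $\Omega(n)$, or equivalently an interval of degrees of that length; then for subsets $S\subseteq W$ of a well-chosen size $k\approx|W|/2$, the middle term $\sum_{w\in S}d_{B}(w)$ sweeps an interval of length $\Omega(k(|W|-k))=\Omega(n^{2})$. This is where the Ramsey hypothesis on $G$ must enter in an essential way. The Erd\H os\textendash Szemer\'edi theorem and its discrepancy-type strengthenings imply that every large induced subgraph of $G$ has density bounded away from $0$ and $1$ and that edge counts between large pieces are close to random; I would iterate a dichotomy that uses this pseudorandomness to refine $W$ and $B$ into a configuration where the cross-degrees are forced to cluster on a long interval. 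I expect this extraction to be the main obstacle: it goes well beyond the Bukh\textendash Sudakov $\Omega(\sqrt n)$ bound on distinct degrees, and it is precisely the point at which the Narayanan\textendash Sahasrabudhe\textendash Tomon argument loses a $e^{\Theta(\sqrt{\log n})}$ factor, so closing the gap will likely require a sharper structural lemma (perhaps showing that the cross-degree distribution of any $\Omega(n)$-vertex subset of a Ramsey graph cannot be supported on fewer than $\Omega(n)$ values of any ``smooth'' range).

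With such a $W$ in hand, the remaining task is to control the noise term $e(G[S])$ so that it does not obscure the variability contributed by the middle term. The same discrepancy/quasirandomness properties should force $e(G[S])$ to concentrate tightly around $\rho\binom{k}{2}$ for most size-$k$ subsets $S$. To actually hit every integer in a target interval of length $\Omega(n^{2})$, I would sweep $S$ by single-vertex swaps: replacing $w\in S$ by $w'\in W\setminus S$ with $d_{B}(w')=d_{B}(w)+1$ shifts the edge count of $G[B\cup S]$ by $1$ plus a small correction coming from the change in $e(G[S])$. By coupling each intended $+1$ step with a compensating swap inside $W$ (whose existence follows from the quasirandomness of $G[W]$), the edge-count sequence can be made to advance by exactly $1$ at each step, sweeping an $\Omega(n^{2})$-length interval of integers and hence forcing $|\Phi(G)|=\Omega(n^{2})$ as required.
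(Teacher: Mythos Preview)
Your plan has two genuine gaps, and together they explain why the paper's proof looks nothing like this outline. The ``central step'' asks for a set $W$ of size $\Omega(n)$ whose cross-degrees $d_B(w)$ fill an interval of length $\Omega(n)$. In a $C$-Ramsey graph the degrees into any fixed linear-size set $B$ behave like binomial random variables with variance $\Theta(n)$, so they are concentrated in a window of width $O(\sqrt{n\log n})$; one cannot expect more than $O(\sqrt n)$ distinct values of $d_B(w)$, let alone an interval of length $\Omega(n)$. You acknowledge this step ``goes well beyond'' Bukh--Sudakov, but in fact it goes beyond what is true for random graphs, and hence beyond what any Ramsey-type hypothesis can deliver. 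Secondly, even granting such a $W$, the ``noise'' term $e(G[S])$ is of order $\Theta(|S|^2)=\Theta(n^2)$ with fluctuations of order $n^{3/2}$ as $S$ varies over size-$k$ subsets; a single swap inside $W$ changes $e(G[S])$ by $\Theta(n)$, not $O(1)$, so the compensating-swap idea cannot pin down the edge count to within~$1$.

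The paper circumvents both obstacles by working at a completely different scale: it takes $|W|=O(\sqrt n)$ (so the noise $e(G[S])$ is only $O(n)$ and controllable), and it recovers the missing factor of $\sqrt n$ not by enlarging $W$ but by varying the \emph{base} set $U$ over $\Omega(\sqrt n)$ different target densities. For each such $U$ it produces $\Omega(n^{3/2})$ subgraph sizes via a three-layer gadget $S,T,X$ of vertices (or, crucially, disjoint \emph{pairs} of vertices) whose degrees into $U$ allow coarse $\Theta(n)$ steps, medium $\Theta(\sqrt n)$ steps, and fine unit steps, with a double-exposure random argument to decouple the coarse and fine scales. The pair trick and the two rounds of randomness are the new ideas that let the argument reach $\Omega(n^2)$ without ever needing more than $O(\sqrt n)$ distinct cross-degrees.
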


We remark that the order of magnitude $n^{2}$ is best possible, because
$\Phi\left(G\right)\subseteq\left\{ 0,\dots,\binom{n}{2}\right\} $
for any $n$-vertex graph. Very loosely speaking, the general approach
of our proof is similar to the proof in \cite{NST16}, but we make
a number of simplifications and introduce some new ideas that we hope
will be useful for other problems.

\subsection{\label{subsec:definitions}Notation and basic definitions}

We use standard asymptotic notation throughout, and all asymptotics
are as $n\to\infty$ unless stated otherwise. Floor and ceiling symbols
will be systematically omitted where they are not crucial.

For two multisets $A$ and $B$, let $A\triangle B$ be the set of
elements which have different multiplicities in $A$ and $B$ (so
if $A$ and $B$ are ordinary sets, then $A\triangle B$ is the ordinary
symmetric difference $\left(A\backslash B\right)\cup\left(B\backslash A\right)$).
For a set $A$, we denote by $\binom{A}{2}$ the set of all unordered
pairs of elements of $A$.

We also use standard graph theoretic notation throughout. In particular,
in a graph, the density of a set of vertices $A$ is defined as $d\left(A\right)=e\left(A\right)/\binom{\left|A\right|}{2}$,
where $e\left(A\right)$ is the number of edges which are contained
inside $A$. Similarly, for any two sets $A$ and $B$, the density
between them is $d\left(A,B\right)=e\left(A,B\right)/\left|A\right|\left|B\right|$,
where $e\left(A,B\right)$ is the number of edges between $A$ and
$B$. For a vertex $v$ and a set of vertices $A$, we denote the
set of neighbours of $v$ in $A$ by $N_{A}\left(v\right)=N\left(v\right)\cap A$
and we denote the degree of $v$ into $A$ by $d_{A}\left(v\right)=\left|N_{A}\left(v\right)\right|$.

We also make some less standard graph theoretic definitions that will
be convenient for the proof. For a pair of vertices $\boldsymbol{v}=\left\{ v_{1},v_{2}\right\} $,
let $N\left(\boldsymbol{v}\right)$ (respectively $N_{U}\left(\boldsymbol{v}\right)$)
be the multiset union of $N\left(v_{1}\right)$ and $N\left(v_{2}\right)$
(respectively, of $N_{U}\left(v_{1}\right)$ and $N_{U}\left(v_{2}\right)$).
Let $d\left(v\right)=d\left(v_{1}\right)+d\left(v_{2}\right)$ (respectively
$d_{U}\left(\boldsymbol{v}\right)=d_{U}\left(v_{1}\right)+d_{U}\left(v_{2}\right)$)
be the size of $N\left(\boldsymbol{v}\right)$ (respectively, of $N_{U}\left(\boldsymbol{v}\right)$),
accounting for multiplicity.

\section{Ideas of the proof and previous work}\label{sec:outline}

As mentioned in the introduction, our proof builds on some ideas of Narayanan, Sahasrabudhe and Tomon in \cite{NST16}. This work in turn builds on the ideas of Bukh and Sudakov in \cite{BS07}. In this section we briefly outline the relevant ideas in both these papers, and discuss the new ideas in this paper.

In \cite{BS07}, Bukh and Sudakov proved that $n$-vertex Ramsey graphs have subgraphs with $\Omega(\sqrt n)$ distinct degrees. To do this, they introduced the notion of \emph{diversity}, as follows. Say an $n$-vertex graph is \emph{$\left(c,\gamma,\right)$-diverse} if for each vertex $x\in V$, we have $\left|N\left(x\right)\triangle N\left(y\right)\right|<cn$ for at most $n^{\gamma}$ vertices $y\in V$. Roughly speaking, this means the neighbourhoods of most pairs of vertices are very different. Bukh and Sudakov went on to prove that for any $C$ and $\gamma>0$, all $C$-Ramsey graphs have $(\Omega(1),\gamma)$-diverse induced subgraphs of linear size.

Now, in an $n$-vertex $(c,\gamma)$-diverse graph $G$, consider a random vertex subset $U$ obtained by including each vertex with some fixed probability $p$ independently. By the diversity assumption, for most pairs of vertices $u,v$
their degrees $d_{U}\left(u\right)$, $d_{U}\left(v\right)$ into
$U$ are not too strongly correlated, and the probability they are
exactly equal turns out to be $O\left(1/\sqrt{n}\right)$. (A simple
intuitive reason for this probability is that $d_{U}\left(u\right)-d_{U}\left(v\right)$
is approximately normally distributed with standard deviation $\Theta\left(\sqrt{n}\right)$).
One may then compute that the expected number of pairs of vertices with the same degree into $U$ is $O(n^{3/2}+n^{1+\gamma})$, so provided $\gamma<1/2$, one may use Tur\'an's theorem to show that there is
an outcome of $G\left[U\right]$ with $\Omega\left(\sqrt{n}\right)$
different degrees. This fact has some immediate consequences for $|\Phi(G)|$: for example, Bukh and Sudakov observed that one can obtain $\Omega(\sqrt n)$ subgraphs with different numbers of edges simply by choosing different vertices of $U$ to delete from $G[U]$.

There are two straightforward ways one might hope to improve on this simple bound. First, we can repeat the above argument for many different values of $p$, and second, instead of deleting single vertices, we might hope to obtain a richer variety of subgraphs by adding and deleting different combinations of vertices. Narayanan, Sahasrabudhe and Tomon \cite{NST16} combined both these ideas, as follows.

In an $n$-vertex $(c,\gamma)$-diverse graph $G$, first use the pigeonhole principle to identify a set $W_0$ of $\Theta(\sqrt n)$ vertices with degrees contained in a narrow interval $[d,d+\sqrt n]$, for some $d=\Omega(n)$. Then, for $\Theta(\sqrt n)$ well-separated values of $p$, do the following. Let $U$ be a random subset of the vertices not in $W_0$, obtained by including each vertex with probability $p$. Using the diversity of $G$, one may compute that the expected number of pairs of vertices of $W_0$ which have the same degree into $U$ is $O(|W_0|^2/\sqrt n+|W_0|n^\gamma)=O(n^{1/2+\gamma})$, so one can show with Tur\'an's theorem that there is an outcome of $U$ such that $W_0$ contains a subset $W$ of $\Omega(n^{1/2-\gamma})$ vertices with different degrees into $U$. Moreover, since the initial degrees $d(w)$ were chosen to be very similar, one can show that actually the $d_U(w)$ are likely to still lie in an interval of length $O(\sqrt n)$.

Because the degrees of vertices in $W$ are so well-behaved, one can then show that many different values $e(G[U\cup Z])$ can be obtained with different subsets $Z\subseteq W$. Indeed, by varying the number of vertices in $Z$, one can change $e(G[U\cup Z])$ by increments of $\Theta(n)$, and by swapping low-degree vertices with high-degree vertices, one can change $e(G[U\cup Z])$ by increments of about $\sqrt n$. That is to say, by choosing subsets $Z\subseteq W$ of certain types, one can obtain $\Omega(n^{1-2\gamma})$ different values of $e(G[U\cup Z])$ separated by $\Omega(\sqrt n)$ from each other.

The above ideas yield $|\Phi(G)|=\Omega(n^{3/2-2\gamma})$ in a relatively straightforward fashion. Since the diversity lemma of Bukh and Sudakov allows $\gamma$ to be arbitrarily small, this proves that $n$-vertex $O(1)$-Ramsey graphs $G$ have $|\Phi(G)|=n^{3/2-o(1)}$. In order to improve this to $n^{2-o(1)}$, one would ideally like to be able to show that for each of the $\Omega(n^{1-o(1)})$ choices of $Z$ described above, one can add an additional vertex $w\in W$ to $Z$ in $n^{1/2-o(1)}$ different ways to obtain about $\sqrt n$ different values of $e(G[U\cup Z\cup \{w\}])$ that ``fill in'' the interval between consecutive values of $e(G[U\cup Z])$. Unfortunately, while by construction the degrees $d_U(w)$, for $w\in W$, are different, it does not follow that the $d_{U\cup Z}(w)$ are also different. In order to make this approach work, the authors of \cite{NST16} came up with a way to introduce some limited randomness into the choice of the sets $Z$, and with a rather delicate combination of concentration and anticoncentration arguments they were able to show that there are likely to be many different values of $d_{U\cup Z}(w)$.

There are two main obstacles that need to be overcome to prove $|\Phi(G)|=\Omega(n^2)$ with the above strategy. Most obviously, there is a factor of $n^\gamma$ that must be eliminated. Recall that this factor originates from the upper bound $O(|W_0|^2/\sqrt n+|W_0|n^\gamma)$ on the expected number of pairs of vertices of $W_0$ which have the same degree into $U$. It does not seem that this estimate itself can be improved, but the unwanted factor of $n^\gamma$ would disappear if we could arrange for $W_0$ to have size $\Omega(n^{1/2+\gamma})$ instead of size $\Theta(\sqrt n)$. Unfortunately, if we want $W_0$ to be asymptotically larger than $\sqrt n$, it is no longer possible to guarantee that the degrees of vertices in $W_0$ fall within an interval of length $O(\sqrt n)$, and this would cause problems in other parts of the argument. The way we overcome this issue is by allowing two possibilities for the structure of $W_0$. Either $W_0$ is a set of vertices as before, or $W_0$ is a set of disjoint \emph{pairs} of vertices $\{x_1,x_2\}$ with similar values of $d(\{x_1,x_2\})=d(x_1)+d(x_2)$. We may then treat these pairs as we would treat single vertices in the above argument, considering sets $Z$ that are the union of some subset of the pairs in $W_0$. Note that there are $\Theta(n^2)$ pairs of vertices in $G$, but only $O(n)$ possibilities for $d(\{x_1,x_2\})$, so this relaxation gives us a lot of flexibility. This idea actually allows us to take $W_0$ to be of size $\Omega(n^{3/4})$, but also introduces some new complications that must be taken care of. In particular, Bukh and Sudakov's notion of $(c,\gamma)$-diversity is not strong enough to deal with pairs of vertices, so in \cref{sec:tools} we introduce a new notion of \emph{$(\delta,\varepsilon)$-richness}.

The second main obstacle concerns the final part of the argument, where one shows that there are likely to be many different values of $d_{U\cup Z}(w)$ among the $w\in W$. In \cite{NST16}, for this part of the argument the main random set $U$ had already been fixed, so the only source of randomness was the much smaller set $Z$. In this setting, in order to find close to $\sqrt n$ different values of $d_{U\cup Z}(w)$ it does not merely suffice to consider the variation induced by the random set $Z$: one must also take advantage of the separation between different $d_U(w)$, and show that this approximately corresponds to separation between the $d_{U\cup Z}(w)$. It seems that with this approach there is an unavoidable loss of a logarithmic factor, and it is not clear how to prove a result stronger than $|\Phi(G)|=\Omega(n^2/\log n)$.

In the present paper we take a somewhat different approach, with a ``double-exposure'' technique. Specifically, we obtain our random set $U$ as a random subset of about half of the vertices of a larger random set $U_0$. Using the ideas sketched above, we can first use the randomness of $U_0$ to show that there are subsets $Z\subseteq W_0$ which give $\Omega(n)$ different values of $e(G[U_0\cup Z])$ separated by $\Omega(\sqrt n)$ from each other. Then, we can use the randomness of $U\subseteq U_0$ to show that for most $Z$ there are $\Omega(\sqrt n)$ different values of $d_{U\cup Z}(w)$, leading to $\Omega(\sqrt n)$ different values of $e(G[U\cup Z\cup \{w\}])$ closely clustered around $e(G[U\cup Z])$. Of course, we also need to show that this second round of randomness did not cause too much damage to the separation we established with the first round: we need to show that the $e(G[U\cup Z])$ are likely to be well-separated from each other, using the fact that the $e(G[U_0\cup Z])$ were chosen to be well-separated from each other. This can be done by taking advantage of the particular structure of the sets $Z$, and considering an appropriate notion of what it means for a sequence of values to be ``well-separated''.

\section{Basic tools}\label{sec:tools}

In this section we give a number of general results which will be useful in the proof of \cref{conj:NST}. Some of these are well-known, and some are new.

First, as mentioned in the introduction, the following lemma is due to Erd\H os
and Szemer\'edi \cite{ES72}.
\begin{lem}
\label{lem:ES}For any $C$ there exists $\varepsilon>0$ such that
every $C$-Ramsey graph has edge density between $\varepsilon$ and
$1-\varepsilon$.
\end{lem}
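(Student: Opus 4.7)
My plan is to argue by contrapositive and by complementation: replacing $G$ with its complement if necessary, it suffices to prove that for any sufficiently small $\varepsilon=\varepsilon(C)$, every $n$-vertex graph of edge density below $\varepsilon$ contains an independent set of size $C\log_2 n$, which would contradict the $C$-Ramsey assumption.

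To construct such an independent set, I would run a greedy iterative procedure. Starting from $V_0=V(G)$, at each step $i+1$ pick some $v_{i+1}\in V_i$ and set $V_{i+1}=V_i\setminus (N(v_{i+1})\cup\{v_{i+1}\})$, so that the resulting sequence $v_1,v_2,\dots$ is automatically independent. The aim is to run the iteration for $k=C\log_2 n$ rounds. Writing $\varepsilon_i$ for the edge density of $G[V_i]$, I would choose $v_{i+1}$ via an averaging argument to guarantee simultaneously that (i) $|V_{i+1}|\geq (1-O(\varepsilon_i))|V_i|$ (so few vertices are lost per step) and (ii) $\varepsilon_{i+1}\leq K\varepsilon_i$ for some absolute constant $K$ (so the density does not blow up too quickly). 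The existence of such a vertex follows from the fact that most vertices of $G[V_i]$ have degree close to the average $\varepsilon_i|V_i|$, and among those an averaging argument shows the expected density in the non-neighborhood is bounded by a constant multiple of $\varepsilon_i$.

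The main obstacle is continuing for $k=C\log_2 n$ rounds despite the potential $K$-fold growth of the density each step: naively the iteration stalls after only $O(\log(1/\varepsilon))$ rounds, far short of the required $\Theta(\log n)$. To push further, I would refine the core averaging lemma so that the density growth factor can be made $1+o(1)$, at the expense of a slightly faster vertex loss per step. A careful bookkeeping of the pair $(|V_i|,\varepsilon_i)$ then yields that the procedure runs for $\Omega(\log n/\log(1/\varepsilon))$ rounds before the live set empties or the density saturates at $\Omega(1)$; choosing $\varepsilon$ small enough depending only on $C$ makes this larger than $C\log_2 n$, producing the required independent set. This recovers the original Erd\H os--Szemer\'edi bound, and the main delicacy lies in calibrating the averaging step to trade density growth against vertex loss in a balanced way.
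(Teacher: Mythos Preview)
The paper does not prove \cref{lem:ES}; it is simply quoted as a result of Erd\H os and Szemer\'edi~\cite{ES72}, so there is no in-paper argument to compare against.

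Your proposal, however, contains a genuine gap. After complementation you reduce to the assertion that \emph{every} $n$-vertex graph of density at most $\varepsilon=\varepsilon(C)$ contains an independent set of size $C\log_2 n$. This assertion is false: let $G$ be the disjoint union of $\lfloor 1/\varepsilon\rfloor$ cliques, each of size about $\varepsilon n$. Then $G$ has density approximately $\varepsilon$, yet $\alpha(G)=\lfloor 1/\varepsilon\rfloor$, a constant independent of $n$. Of course this $G$ is not $C$-Ramsey (it has a clique of linear size), but your greedy procedure never invokes the absence of large cliques, so it cannot distinguish this $G$ from a genuine $C$-Ramsey graph of the same density. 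Any correct proof must use both halves of the Ramsey hypothesis. A related symptom appears in your final bookkeeping: you claim the iteration survives for $\Omega(\log n/\log(1/\varepsilon))$ rounds and that taking $\varepsilon$ small pushes this above $C\log_2 n$, but $\log n/\log(1/\varepsilon)$ \emph{decreases} as $\varepsilon\to 0$, so the conclusion does not follow from the stated bound.

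The actual Erd\H os--Szemer\'edi argument instead runs an Erd\H os--Szekeres-type process in which at each step one may pass either to a neighbourhood (extending a potential clique) or to a non-neighbourhood (extending a potential independent set). Low global density makes the non-neighbourhood steps nearly lossless, while neighbourhood steps cannot occur too many times without producing a large clique; balancing the two yields a homogeneous set of size $f(\varepsilon)\log n$ with $f(\varepsilon)\to\infty$ as $\varepsilon\to 0$, which is what the lemma requires.
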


Next, we need the notion of \emph{diversity}, introduced by Bukh and
Sudakov \cite{BS07}. Recall from \cref{sec:outline} that an $n$-vertex graph is \emph{$\left(c,\gamma\right)$-diverse}
if for each vertex $x\in V$, we have $\left|N\left(x\right)\triangle N\left(y\right)\right|<cn$
for at most $n^{\gamma}$ vertices $y\in V$. Roughly speaking, this
means the neighbourhoods of most pairs of vertices are very different. As in \cite{BS07}, we will actually only ever need to take $\gamma=1/5$, so we write ``$c$-diverse'' as shorthand for ``$(c,1/5)$-diverse''.

In the same way as \cite{BS07,NST16}, the significance of this notion for us is
that in a diverse graph, if $U$ is a random set of vertices, then
for most pairs of vertices their degrees into $U$ are not too strongly
correlated. In addition to this basic notion of diversity we also
introduce a notion of diversity for \emph{pairs} of vertices. Say
an $n$-vertex graph is \emph{$\left(c,\alpha\right)_{2}$-diverse}
if for each pair of vertices $\boldsymbol{x}=\left\{ x_{1},x_{2}\right\} $
such that $\left|N\left(x_{1}\right)\triangle\overline{N\left(x_{2}\right)}\right|\ge\alpha n$,
one cannot find $n^{1/5}$ other pairs $\boldsymbol{y}=\left\{ y_{1},y_{2}\right\} $,
disjoint to $\boldsymbol{x}$ and each other, such that $\left|N\left(\boldsymbol{x}\right)\triangle N\left(\boldsymbol{y}\right)\right|<cn$
(recall from \cref{subsec:definitions} the non-standard multiset definitions
of $N\left(\boldsymbol{x}\right),N\left(\boldsymbol{y}\right)$ and
$N\left(\boldsymbol{x}\right)\triangle N\left(\boldsymbol{y}\right)$).

In this paper, it will be convenient to deduce diversity from a slightly
stronger condition. Say an $n$-vertex graph is \emph{$\left(\delta,\varepsilon\right)$-rich}
if for any vertex subset $W$ with $\left|W\right|\ge\delta n$, at
most $n^{1/5}$ vertices $v$ have $\left|N\left(v\right)\cap W\right|<\varepsilon\left|W\right|$
or $\left|\overline{N\left(v\right)}\cap W\right|<\varepsilon\left|W\right|$. We remark that a slightly different definition of richness appeared in the published version of this paper, which was not quite suitable for our application. We thank Mantas Baksys and Xuanang Chen for bringing this to our attention.

\begin{lem}
\label{lem:diverse-pairs}Let $G$ be a $\left(\delta,\varepsilon\right)$-rich
graph on a set $V$ of $n$ vertices, with $\delta\le1/2$. Then,
\begin{enumerate}
\item $G$ is $\left(\varepsilon/2\right)$-diverse;
\item $G$ is $\left(\alpha\varepsilon/2,\alpha\right)_{2}$-diverse
for any $\alpha\ge2\delta$;
\item $G$ has at most $n^{1+1/5}$ pairs $\left\{ x_{1},x_{2}\right\} \in\binom{V}{2}$
with $\left|N\left(x_{1}\right)\triangle\overline{N\left(x_{2}\right)}\right|<\left(\varepsilon/2\right)n$.
\end{enumerate}
\end{lem}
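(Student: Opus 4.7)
The plan is to apply the definition of $(\delta,\varepsilon)$-richness to carefully chosen ``witness'' sets $W$ derived from $x$ or from the pair $\boldsymbol{x}$. In each of the three conclusions, the closeness condition on $y$ (or $\boldsymbol{y}$) will force its neighbourhood to be either nearly empty or nearly full on $W$, and richness will then cap the number of such candidates by $n^{\delta}$.

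For item~(1), given $x$, I would take $W$ to be whichever of $N(x)$ and $\overline{N(x)}$ has size at least $n/2$; this is valid since $\delta\le 1/2$ gives $n/2\ge\delta n$. Any vertex $y$ with $|N(x)\triangle N(y)|<(\varepsilon/2)n$ disagrees with $x$ on fewer than $(\varepsilon/2)n$ vertices of $W$, so either $|\overline{N(y)}\cap W|<(\varepsilon/2)n\le\varepsilon|W|$ or $|N(y)\cap W|<\varepsilon|W|$, depending on which set we chose. Richness then bounds the number of such $y$ by $n^{\delta}$. Item~(3) uses the same idea after the identity
\[|N(x_1)\triangle\overline{N(x_2)}|=|N(x_1)\cap N(x_2)|+|\overline{N(x_1)}\cap\overline{N(x_2)}|,\]
which forces $x_2$ to be ``bad'' for one of the sets $W\in\{N(x_1),\overline{N(x_1)}\}$ of size at least $n/2$; summing the resulting bound of $n^{\delta}$ choices for $x_2$ over the $n$ choices of $x_1$ and dividing by $2$ gives at most $n^{1+\delta}$ unordered pairs.

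For item~(2), I would again case-split via the identity above: since $|N(x_1)\triangle\overline{N(x_2)}|\ge\alpha n$, one of the sets $W=N(x_1)\cap N(x_2)$ or $W=\overline{N(x_1)}\cap\overline{N(x_2)}$ has $|W|\ge\alpha n/2\ge\delta n$ (using $\alpha\ge 2\delta$). On $W$, each vertex has multiplicity $m_{\boldsymbol{x}}(v)=2$ (first case) or $m_{\boldsymbol{x}}(v)=0$ (second case) in the multiset $N(\boldsymbol{x})$. The bound $|N(\boldsymbol{x})\triangle N(\boldsymbol{y})|<(\alpha\varepsilon/2)n\le\varepsilon|W|$ therefore translates, via $\sum_{v\in W}|m_{\boldsymbol{x}}(v)-m_{\boldsymbol{y}}(v)|<\varepsilon|W|$, into
\[|N(y_1)\cap W|+|N(y_2)\cap W|>(2-\varepsilon)|W|\quad\text{or}\quad|N(y_1)\cap W|+|N(y_2)\cap W|<\varepsilon|W|,\]
each of which forces \emph{both} $y_1$ and $y_2$ individually to be ``bad'' for $W$ in the sense of richness. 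Hence both $y_1,y_2$ must lie in a common set $S$ with $|S|\le n^{\delta}$.

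The key subtlety --- and the reason the bound $n^{\delta}$ (rather than $n^{2\delta}$) is the right one --- is that the definition of $(c,\delta,\alpha)_2$-diversity demands the pairs $\boldsymbol{y}$ to be pairwise disjoint. Since each such pair must consist of two vertices from $S$, any family of pairwise disjoint bad pairs can contain at most $\lfloor|S|/2\rfloor<n^{\delta}$ members, which is exactly what is needed. Identifying this disjointness trick is the only place where any real thought is required; the rest of the argument is a direct computation with multiset symmetric differences.
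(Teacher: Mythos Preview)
Your plan is the same as the paper's, and items~(1) and~(3) go through exactly as you describe. There is one technical slip in item~(2) that would cause the argument, as written, to lose a factor of~$2$ and fail to close.

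Under the paper's convention (Section~1.1), $N(\boldsymbol{x})\triangle N(\boldsymbol{y})$ is the \emph{set} of vertices whose multiplicities differ, so $|N(\boldsymbol{x})\triangle N(\boldsymbol{y})|=|\{v:m_{\boldsymbol{x}}(v)\ne m_{\boldsymbol{y}}(v)\}|$, not the weighted sum $\sum_v|m_{\boldsymbol{x}}(v)-m_{\boldsymbol{y}}(v)|$. On $W=N(x_1)\cap N(x_2)$ every vertex has $m_{\boldsymbol{x}}=2$, so a vertex with $m_{\boldsymbol{y}}=0$ contributes $2$ to your sum but only $1$ to $|N(\boldsymbol{x})\triangle N(\boldsymbol{y})|$; hence from $|N(\boldsymbol{x})\triangle N(\boldsymbol{y})|<\varepsilon|W|$ you only obtain $\sum_{v\in W}|m_{\boldsymbol{x}}(v)-m_{\boldsymbol{y}}(v)|<2\varepsilon|W|$, giving $|\overline{N(y_i)}\cap W|<2\varepsilon|W|$, which is not the threshold richness demands. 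The repair is to bypass the sum entirely: if $v\in\overline{N(y_i)}\cap W$ then $m_{\boldsymbol{x}}(v)=2$ while $m_{\boldsymbol{y}}(v)\le 1$, so $v\in N(\boldsymbol{x})\triangle N(\boldsymbol{y})$, whence directly
\[
|\overline{N(y_i)}\cap W|\le |N(\boldsymbol{x})\triangle N(\boldsymbol{y})|<(\alpha\varepsilon/2)n\le\varepsilon|W|.
\]
This is precisely the paper's one-line argument; with this correction your proof and the paper's coincide, including the use of the disjointness of the pairs $\boldsymbol{y}$ to cap their number by $n^{\delta}$.
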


\begin{proof}
For the first statement, for each vertex $x$ either $\left|N\left(x\right)\right|\ge n/2$
or $\left|\overline{N\left(x\right)}\right|\ge n/2$. In the former
case, for all but at most $n^{1/5}$ vertices $y$ we have $\left|N\left(x\right)\cap\overline{N\left(y\right)}\right|\ge\varepsilon\left|N\left(x\right)\right|\ge\varepsilon n/2$,
and in the latter case for all but at most $n^{1/5}$ vertices
$y$ we have $\left|\overline{N\left(x\right)}\cap N\left(y\right)\right|\ge\varepsilon\left|\overline{N\left(x\right)}\right|\ge\varepsilon n/2$.
In either case, there are at most $n^{1/5}$ vertices $y$ with
$\left|N\left(x\right)\triangle N\left(y\right)\right|<\varepsilon n/2$,
as desired.

For the second statement, note that if $\left|N\left(x_{1}\right)\triangle\overline{N\left(x_{2}\right)}\right|\ge\alpha n$
then $\left|N\left(x_{1}\right)\cap N\left(x_{2}\right)\right|\ge\left(\alpha/2\right)n$
or $\left|\overline{N\left(x_{1}\right)}\cap\overline{N\left(x_{2}\right)}\right|\ge\left(\alpha/2\right)n$.
Suppose that there were a pair $\boldsymbol{x}$ and a collection
$Y$ of $n^{1/5}$ pairs contradicting $\left(\alpha\varepsilon/2,\alpha\right)_{2}$-diversity,
and suppose without loss of generality that $\left|N\left(x_{1}\right)\cap N\left(x_{2}\right)\right|\ge\left(\alpha/2\right)n\ge\delta n$.
Then, for each vertex $y$ in each $\boldsymbol{y}\in Y$,
\[
\left|\overline{N\left(y\right)}\cap N\left(x_{1}\right)\cap N\left(x_{2}\right)\right|\le\left|N\left(\boldsymbol{x}\right)\triangle N\left(\boldsymbol{y}\right)\right|<\left(\alpha\varepsilon/2\right)n\le\varepsilon\left|N\left(x_{1}\right)\cap N\left(x_{2}\right)\right|,
\]
and the set of all such $y$ would contradict $\left(\delta,\varepsilon\right)$-richness.

For the third statement, we will show that for each of the $n$ choices
of $x_{1}$ there are at most $n^{1/5}$ pairs $\left\{ x_{1},x_{2}\right\} \in\binom{V}{2}$
with $\left|N\left(x_{1}\right)\triangle\overline{N\left(x_{2}\right)}\right|<\left(\varepsilon/2\right)n$.
Consider any vertex $x_{1}$ and suppose without loss of generality
that $\left|N\left(x_{1}\right)\right|\ge n/2$. There are at most
$n^{1/5}$ vertices $x_{2}$ with $N\left(x_{2}\right)\cap N\left(x_{1}\right)<\varepsilon n/2$,
and for all other $x_{2}$ we have $\left|N\left(x_{1}\right)\triangle\overline{N\left(x_{2}\right)}\right|\ge\left|N\left(x_{2}\right)\cap N\left(x_{1}\right)\right|\ge\left(\varepsilon/2\right)n$.
\end{proof}
Now, we show that every $C$-Ramsey graph contains a rich induced
subgraph of linear size. The proof approach is based on a related
lemma due to Bukh and Sudakov \cite[Lemma~2.2]{BS07}, which in turn
uses ideas from \cite{She98,PR99}.
\begin{lem}
\label{lem:rich}For any $C,\delta>0$, there exist $\varepsilon=\varepsilon\left(C\right)>0$
and $c=c\left(C,\delta\right)>0$ and $n_0=n_0(\delta)$ such that if $n\ge n_0$ then every $n$-vertex $C$-Ramsey
graph contains a $\left(\delta,\varepsilon\right)$-rich induced subgraph
on at least $cn$ vertices.
\end{lem}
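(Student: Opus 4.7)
The plan is to argue by contradiction, adapting the approach of Bukh and Sudakov's analogous diversity lemma, which itself builds on ideas of Shelah and Pr\"omel--R\"odl. I would assume that $G$ is an $n$-vertex $C$-Ramsey graph but that no induced subgraph of $G$ on at least $cn$ vertices is $(\delta,\varepsilon)$-rich, and then derive a homogeneous set of size at least $C\log_2 n$, contradicting the Ramsey property. The parameters will be $\varepsilon=\varepsilon(C)$ sufficiently small and $c=c(C,\delta)$ sufficiently small.

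The construction is iterative. I would build a nested sequence of vertex sets $V_0=V(G)\supseteq V_1\supseteq V_2\supseteq\cdots$ together with an accumulated set $H$, maintaining the invariant that every vertex of $H$ is non-adjacent (or, in the symmetric case, uniformly adjacent) to every vertex of $V_i$, so that $H$ becomes either an independent set or a clique. At each step $i$, provided $|V_i|\ge cn$, the failure of $(\delta,\varepsilon)$-richness of $G[V_i]$ supplies a witness $W_i\subseteq V_i$ with $|W_i|\ge\delta|V_i|$ together with more than $|V_i|^\delta$ ``bad'' vertices $v$, namely those with $|N(v)\cap W_i|$ outside $[\varepsilon|W_i|,(1-\varepsilon)|W_i|]$. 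After pigeonholing on the adjacency type (and further pigeonholing across iterations to ensure consistency in the independent-set versus clique choice), a subset $T_i$ of size at least $|V_i|^\delta/2$ consists of vertices with $|N(v)\cap W_i|<\varepsilon|W_i|$.

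In the simplest version of the extraction, I would pick $v_i\in T_i$, add it to $H$, and set $V_{i+1}:=W_i\setminus N(v_i)$, which has size at least $(1-\varepsilon)\delta|V_i|$ and preserves the invariant (since $v_i$ is non-adjacent to every vertex of $V_{i+1}$). Iterating gives $|V_K|\ge((1-\varepsilon)\delta)^K n$, so the process supports at most $K=O(\log(1/c)/\log(1/\delta))$ iterations before $|V_K|<cn$.

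The main obstacle is that this naive bound on $K$ is only a constant in $n$, producing a homogeneous set of merely constant size---far short of the required $C\log_2 n$. Overcoming this is the technical heart of the argument. Following Bukh--Sudakov, I would exploit the bad set $T_i$ more aggressively: since $|T_i|\ge(cn)^\delta$ is polynomially large and $G[T_i]$ is itself a subgraph of the Ramsey graph $G$, one can extract a homogeneous subset of size $\Omega(\delta\log n)$ from $T_i$ per iteration via Erd\H os--Szekeres, combined with a bootstrapping argument that amplifies the extraction rate across iterations while keeping $|V_{i+1}|$ comparable to $|V_i|$. With $\varepsilon$ chosen small in terms of $C$ and $c$ chosen small in terms of $C$ and $\delta$, the accumulated homogeneous set $H$ exceeds $C\log_2 n$, yielding the desired contradiction.
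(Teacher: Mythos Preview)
Your iterative framework is on the right track and matches the paper's setup, but the ``bootstrapping'' step you invoke at the end is precisely where the real content lies, and as stated it does not go through. The difficulty is this: if at stage $i$ you extract a homogeneous set $H_i\subseteq T_i$ of size $\ell=\Omega(\delta\log n)$ and then want every vertex of $H_i$ to be non-adjacent to all of $V_{i+1}$, you are forced to take $V_{i+1}\subseteq W_i\setminus\bigcup_{v\in H_i}N(v)$, which only guarantees $|V_{i+1}|\ge(1-\ell\varepsilon)|W_i|$. Keeping $|V_{i+1}|$ a constant fraction of $|V_i|$ therefore requires $\varepsilon=O(1/\log n)$, contradicting the requirement that $\varepsilon$ be a constant depending only on $C$. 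With constant $\varepsilon$ and constantly many iterations $K$, strict non-adjacency caps the accumulated homogeneous set at size $O(K/\varepsilon)=O(1)$, not $C\log n$. There is no evident ``amplification'' that repairs this while preserving strict non-adjacency.

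The paper's proof avoids this obstruction by \emph{not} aiming for a homogeneous set directly. Instead of a single vertex $v_i$ (or a homogeneous subset of $T_i$), it keeps the entire bad set $S_i$ of size $(cn)^\delta/2$, and relaxes the invariant from ``$S_i$ is non-adjacent to $U_i$'' to ``$d(S_i,\{v\})\le 4\varepsilon$ for every $v\in U_i$''. This is achievable by an averaging argument: since every $y\in S_i$ has $d(\{y\},W_i\setminus S_i)\le 2\varepsilon$, at most half the vertices of $W_i\setminus S_i$ can have density $>4\varepsilon$ to $S_i$, so discarding them still leaves $|U_i|\ge(\delta/4)|U_{i-1}|$. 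After $K=K(C)$ steps one has disjoint sets $S_1,\dots,S_K$ with $d(S_i,S_j)<4\varepsilon$ for $i<j$ (after passing to the majority side). The union $S$ is then a set of size polynomial in $n$ with $d(S)<4\varepsilon+2/K$, and the contradiction comes from \cref{lem:ES} (Erd\H os--Szemer\'edi): a $C$-Ramsey graph cannot induce such a sparse subgraph on polynomially many vertices. So the key idea you are missing is to trade exact non-adjacency for density control, which lets you keep all $(cn)^\delta/2$ bad vertices per step, and then to invoke the density lemma rather than trying to assemble the large homogeneous set by hand.
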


\begin{proof}
Suppose for the purpose of contradiction that every set of at least
$cn$ vertices fails to induce a $\left(\delta,\varepsilon\right)$-rich
subgraph, for $c,\varepsilon$ to be determined. For some large $K=K\left(C\right)$
to be determined, we will inductively construct a sequence of induced
subgraphs $G=G\left[U_{0}\right]\supseteq G\left[U_{1}\right]\supseteq\dots\supseteq G\left[U_{K}\right]$
and disjoint vertex sets $S_{1},\dots,S_{K}$ such that for all $i$,
$\left|U_{i}\right|\ge\left(\delta/4\right)\left|U_{i-1}\right|$,
$\left|S_{i}\right|=\left(cn\right)^{1/5}/2$, $S_{i}\subseteq U_{i-1}$,
and
\begin{equation}
\left[d\left(S_{i},S_{j}\right)<4\varepsilon\text{ for all }j>i\right]\text{ or }\left[d\left(S_{i},S_{j}\right)>1-4\varepsilon\text{ for all }j>i\right].\label{eq:density-cases}
\end{equation}
This will suffice, as follows. Without loss of generality suppose
that the first case of \cref{eq:density-cases} holds for at least
half of the choices of $i$, and let $S$ be the union of the corresponding
$S_{i}$. Then one can compute $d\left(S\right)<4\varepsilon+2/K$.
For sufficiently small $\varepsilon$, large $K$ and large $n_0$, this density
is too low for $G\left[S\right]$ not to contain a homogeneous subgraph
of size $C\log n$, by \cref{lem:ES}.

Let $U_{0}=V\left(G\right)$. For $1\le i\le K$ we will construct
$U_{i},S_{i}$, assuming $U_{0},\dots,U_{i-1},\,S_{1},\dots,S_{i-1}$
have already been constructed. For $c\le (\delta/4)^K$ we have $\left|U_{i-1}\right|\ge cn$, so by assumption $U_{i-1}$ contains a set $W$
of at least $\delta\left|U_{i-1}\right|$ vertices and a set $Y$
of $\left(cn\right)^{1/5}$ vertices contradicting $\left(\delta,\varepsilon\right)$-richness.
Suppose without loss of generality that $\left|N_{U_{i-1}}\left(v\right)\cap W\right|\le\varepsilon\left|W\right|$
for half the vertices $v\in Y$, and let $S_{i}$ be the corresponding
subset of $Y$. Then, let $U=W\backslash S_{i}$, so $\left|U\right|\ge\left|W\right|/2$,
and let $U_{i}\subseteq U$ be the set of vertices $v\in U$ with
$d\left(\left\{ v\right\} ,S_{i}\right)\le4\varepsilon$. Now, we
just need to show $\left|U_{i}\right|\ge\left(\delta/4\right)\left|U_{i-1}\right|$.
To this end, first observe that for all $y\in S_{i}$ we have $d\left(\left\{ y\right\} ,U\right)=d_{U}\left(y\right)/\left|U\right|\le\left(\varepsilon\left|W\right|\right)/\left(\left|W\right|/2\right)=2\varepsilon$.
Then,
\[
4\varepsilon\left|U\backslash U_{i}\right|<\sum_{v\in U\backslash U_{i}}d\left(\left\{ v\right\} ,S_{i}\right)\le\frac{e\left(U,S_{i}\right)}{\left|S_{i}\right|}=\frac{\left|U\right|}{\left|S_{i}\right|}\sum_{y\in S_{i}}d\left(\left\{ y\right\} ,U\right)\le2\varepsilon\left|U\right|,
\]
implying that $\left|U_{i}\right|>\left|U\right|/2\ge\left(\delta/4\right)\left|U_{i-1}\right|$,
as desired.
\end{proof}
Next, we will use a very slight variation of the Erd\H os\textendash Littlewood\textendash Offord
theorem. Say a random variable is of \emph{$\left(n,p\right)$-Littlewood\textendash Offord
type} if it can be expressed in the form $X=a_{1}\xi_{1}+\dots+a_{n}\xi_{n}+C$,
where $a_{1},\dots,a_{n}\in\ZZ\backslash\left\{ 0\right\} $ and $C\in\ZZ$
are fixed and $\xi_{0},\dots,\xi_{n}$ are independent, identically
distributed $p$-Bernoulli random variables (taking the value 1 with
probability $p$ and the value 0 with probability $1-p$). The following
variation of the Erd\H os\textendash Littlewood\textendash Offord
theorem follows from, for example, \cite[Lemma~A.1]{BVW10}.
\begin{lem}
\label{lem:ELO}Suppose $X$ is of $\left(n,p\right)$-Littlewood\textendash Offord
type, for $p=\Omega\left(1\right)$ and $1-p=\Omega\left(1\right)$.
Then for any $x\in\ZZ$, $\Pr\left(X=x\right)=O\left(1/\sqrt{n}\right).$
\end{lem}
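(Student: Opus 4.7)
The plan is to reduce the biased case to the classical Erd\H os\textendash Littlewood\textendash Offord theorem (for uniform $\pm 1$ signs) via a coupling that extracts a genuinely fair coin from each $p$-Bernoulli. The key observation is the mixture decomposition
\[
p = \bigl(p^{2} + (1-p)^{2}\bigr) \cdot \frac{p^{2}}{p^{2} + (1-p)^{2}} + 2p(1-p) \cdot \frac{1}{2},
\]
which lets us sample each $\xi_{i}$ in two stages: independently for each $i$, with probability $q := 2p(1-p)$ declare $i$ to be \emph{randomised} and let $\xi_{i}$ be a uniform element of $\{0,1\}$; otherwise, sample $\xi_{i}$ from a biased coin of parameter $p^{2}/(p^{2}+(1-p)^{2})$. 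Let $R \subseteq \{1,\dots,n\}$ denote the random set of randomised indices. Since $p, 1-p = \Omega(1)$ we have $q = \Omega(1)$, so a Chernoff bound gives $|R| \ge qn/2$ except on an event of probability $e^{-\Omega(n)}$.

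Next I would condition on $R$ and on the values $\{\xi_{i} : i \notin R\}$. By construction, conditionally the remaining variables $\{\xi_{i} : i \in R\}$ are i.i.d.\ fair coins, so writing $\xi_{i} = (1 + \varepsilon_{i})/2$ with $\varepsilon_{i} \in \{-1,+1\}$ uniform, the conditional distribution of $2X$ becomes $C' + \sum_{i \in R} a_{i} \varepsilon_{i}$ for some integer $C'$ depending on the conditioning. Since the $a_{i}$ are nonzero integers, the classical Erd\H os\textendash Littlewood\textendash Offord theorem implies that $\sum_{i \in R} a_{i} \varepsilon_{i}$ hits any fixed integer value with probability $O(1/\sqrt{|R|})$. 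When $|R| \ge qn/2$ this is $O(1/\sqrt{n})$, and the event $|R| < qn/2$ contributes only $e^{-\Omega(n)}$, yielding $\Pr(X = x) = O(1/\sqrt{n})$ uniformly in $x \in \ZZ$.

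There is no substantive obstacle; the lemma is essentially the symmetric Littlewood\textendash Offord theorem applied to a suitable random subset of indices. The only delicate point is the coupling: one must realise each $p$-Bernoulli as a mixture containing a genuine fair coin, rather than (say) the weaker symmetrisation via $\xi_{i} - \xi_{i}'$, which would only deliver anticoncentration of $X - X'$ and, via Cauchy\textendash Schwarz, lose a factor of $n^{1/4}$. The mixture identity above handles this cleanly, after which the classical theorem does all the work.
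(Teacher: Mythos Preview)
Your argument is correct. The mixture identity is valid (the probability that $\xi_i=1$ comes out to $p(1-p)+p^2=p$), the conditioning step is clean, and once you have $|R|$ genuine Rademacher summands with nonzero integer coefficients the classical Erd\H os bound $\binom{|R|}{\lfloor |R|/2\rfloor}/2^{|R|}=O(1/\sqrt{|R|})$ applies. The Chernoff estimate on $|R|$ and the treatment of the exceptional event are routine.

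The paper itself does not prove this lemma: it simply cites \cite[Lemma~A.1]{BVW10} (Bourgain\textendash Vu\textendash Wood), which establishes a somewhat more general anticoncentration bound for sums of independent random variables satisfying a uniform nondegeneracy condition. Your mixture-and-condition reduction to the symmetric case is a standard and self-contained alternative; it is essentially the argument one uses to deduce the biased-coin Littlewood\textendash Offord inequality from the Rademacher one, and has the advantage of being elementary. Your closing remark about why the naive symmetrisation $\xi_i-\xi_i'$ loses a factor of $n^{1/4}$ via Cauchy\textendash Schwarz is also accurate and a nice point of comparison.
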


Finally, throughout the proof we will use Markov's inequality, Chebyshev's
inequality, the Chernoff bound, the Azuma\textendash Hoeffding inequality
and Tur\'an's theorem. Statements and proofs of all of these can
be found, for example, in \cite{AS}.

\section{Proof of \texorpdfstring{\cref{conj:NST}}{Theorem~\ref{conj:NST}}}

Very broadly, as outlined in \cref{sec:outline}, the basic idea of our proof is similar to the proof
in \cite{NST16}. We will find many induced subgraphs $G[U]$ with ``well-separated''
numbers of edges, and we will augment these subgraphs in many different
ways. To be precise, \cref{conj:NST} will be an immediate consequence
of the following lemma.
\begin{lem}
\label{lem:per-m}For any $C$ there is $c=c(C)>0$ such that the following
holds. For any $n$-vertex $C$-Ramsey graph $G$ and any $m$ satisfying
$cn^{2}\le m\le2cn^{2}$, there are disjoint subsets $U,W\subseteq V$
with $\left|e\left(U\right)-m\right|=O\left(n^{3/2}\right)$ and $\left|W\right|=O\left(\sqrt{n}\right)$
such that
\[
\left|\left\{ e\left(U\cup Z\right):Z\subseteq W\right\} \right|=\Omega\left(n^{3/2}\right).
\]
\end{lem}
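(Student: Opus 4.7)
The plan is to carry out the double-exposure strategy described in \cref{sec:outline}, with \cref{lem:rich} and \cref{lem:diverse-pairs} as the main technical inputs. First apply \cref{lem:rich} with a small $\delta>0$ to reduce to a $(\delta,\varepsilon)$-rich induced subgraph of linear size (of density bounded away from $0,1$ by \cref{lem:ES}). Inside this subgraph I build the auxiliary set $W$ using pairs of vertices: partitioning the $\binom{n}{2}$ pairs according to their pair-degree $d(\{x_1,x_2\})\in[0,2n]$ into $O(\sqrt n)$ intervals of length $\sqrt n$, pigeonhole produces an interval containing $\Omega(n^{3/2})$ pairs. Treating these pairs as edges of a multigraph on $n$ vertices and using the elementary fact that a multigraph with $M$ edges contains a matching of size $\Omega(\sqrt M)$, I extract a matching $W_0$ of $\Omega(n^{3/4})$ disjoint pairs whose pair-degrees lie in a common interval of length $\sqrt n$. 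The final $W$ will be the union of the endpoints of a sub-matching $W'\subseteq W_0$ of size $\Omega(\sqrt n)$, so $|W|=O(\sqrt n)$.

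Given the target $m$, pick $p=p(m)$ bounded away from $0,1$ so that a $p$-random subset of $V\setminus W$ has expected edge count $m$, and factor $p=p_{0}q$ with $p_{0},q=\Theta(1)$. Let $U_{0}$ be $p_{0}$-random in $V\setminus W$, and let $U\subseteq U_{0}$ be $q$-random inside $U_{0}$, so $U$ is $p$-random in $V\setminus W$; Azuma--Hoeffding then gives $|e(U)-m|=O(n^{3/2})$. In the first exposure, for most pairs $\mathbf{x},\mathbf{y}\in W_0$ the difference $d_{U_0}(\mathbf{x})-d_{U_0}(\mathbf{y})$ is a Littlewood--Offord-type sum with $\Omega(n)$ nonzero coefficients (by \cref{lem:diverse-pairs}(2,3)), so by \cref{lem:ELO} its collision probability is $O(1/\sqrt n)$. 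The expected number of colliding pairs of pairs in $W_0$ is therefore $O(|W_0|^2/\sqrt n+|W_0|n^{\delta})=O(n)$, and Tur\'an's theorem extracts a sub-matching $W'\subseteq W_0$ of size $\Omega(\sqrt n)$ on which all $d_{U_0}(\mathbf{w})$ are distinct; Azuma--Hoeffding further confines them to a common interval of length $O(\sqrt n)$. Varying $|S|$ (each unit increment shifts $e(U_0\cup Z)$ by a pair-degree, i.e.\ by $\Theta(n)$) and performing swaps of pairs with different $d_{U_0}(\mathbf{w})$ (shifting by increments of $\Omega(\sqrt n)$), I select $\Omega(n)$ subsets $S_1,\dots,S_t\subseteq W'$ whose corresponding $Z_i=\bigcup S_i$ give values $e(U_0\cup Z_i)$ that are $\Omega(\sqrt n)$-separated, taking care to make the separation robust against $O(\sqrt n)$-sized fluctuations of $e(Z_i)$ under swaps.

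Finally I use the randomness of $U\subseteq U_0$ twice over. Concentration (Azuma--Hoeffding over the second exposure) gives $|e(U\cup Z_i)-\E[e(U\cup Z_i)\mid U_0]|=O(\sqrt n)$ with high probability, and the explicit formula
\[
\E[e(U\cup Z_i)\mid U_0]=q^2 e(U_0)+q\sum_{\mathbf{w}\in S_i}d_{U_0}(\mathbf{w})+e(Z_i)
\]
shows that if the first-stage $\Omega(\sqrt n)$-separation is phrased in a weighted form that tracks both $\sum_{\mathbf{w}\in S_i}d_{U_0}(\mathbf{w})$ and $|S_i|$ (rather than $e(U_0\cup Z_i)$ alone), then it survives as $\Omega(\sqrt n)$-separation of the $e(U\cup Z_i)$. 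Simultaneously, for a typical $Z_i$, another application of \cref{lem:ELO} to the second-stage randomness shows that $\{d_{U\cup Z_i}(\mathbf{w}):\mathbf{w}\in W'\setminus S_i\}$ contains $\Omega(\sqrt n)$ distinct values, all in an interval of length $O(\sqrt n)$. Hence $\{e(U\cup Z_i\cup \mathbf{w}):\mathbf{w}\in W'\setminus S_i\}$ is an $\Omega(\sqrt n)$-cluster around $e(U\cup Z_i)$, and the clusters for different $i$ are disjoint by the first-stage separation, yielding $\Omega(n)\cdot\Omega(\sqrt n)=\Omega(n^{3/2})$ distinct values in $\{e(U\cup Z):Z\subseteq W\}$. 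The main obstacle is in this last step: I need the Littlewood--Offord conclusion for a large fraction of the $\Omega(n)$ indices $i$ simultaneously (a naive union bound over $i$ is far too lossy), while at the same time the first-stage separation of $e(U_0\cup Z_i)$ must persist through the second exposure. Coupling these calls for selecting the correct form of ``well-separated'' at stage one, and then using Markov's inequality over the index $i$ to retain a subcollection of $\Omega(n)$ good $Z_i$ on which both properties hold.
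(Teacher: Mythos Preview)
Your overall strategy matches the paper's, but two concrete steps fail as written.

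First, the ``elementary fact that a multigraph with $M$ edges contains a matching of size $\Omega(\sqrt M)$'' is false: the complete bipartite graph $K_{\sqrt n,\,n}$ has $n^{3/2}$ edges but maximum matching only $\sqrt n$. With $M=\Theta(n^{3/2})$ edges on $n$ vertices you are only guaranteed a matching of size $\Omega(\sqrt n)$, and then your Tur\'an step yields $|W'|=\Omega(n^{1/2-\delta})$ rather than $\Omega(\sqrt n)$, reintroducing exactly the $n^\delta$ loss you were trying to avoid. The paper repairs this with a dichotomy (see the proof of \cref{lem:sub-per-m}): either the pair-graph has a matching of size $\Omega(n^{3/4})$, \emph{or} it has a vertex of degree $\Omega(n^{3/4})$, in which case its neighbours form a set $L$ of \emph{single} vertices with nearly equal degrees. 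Both branches feed into the same downstream argument, which is why \cref{lem:sub-per-m} is stated with the ``vertices or disjoint pairs'' alternative.

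Second, and more seriously, the claim ``Azuma--Hoeffding gives $|e(U\cup Z_i)-\E[e(U\cup Z_i)\mid U_0]|=O(\sqrt n)$'' is off by a large factor. Toggling a single vertex $u\in U_0$ changes $e(Z_i,U)$ by $d_{Z_i}(u)$, which can be as large as $|Z_i|=\Theta(\sqrt n)$; so Azuma over the $\Theta(n)$ second-stage bits only gives fluctuation $O(n)$ for $e(U\cup Z_i)-e(U)$ (and $e(U)$ itself fluctuates by $\Theta(n^{3/2})$). An $O(n)$ fluctuation obliterates any $\Omega(\sqrt n)$-separation you arranged at stage one, and no ``weighted form'' of the separation on $\bigl(\sum d_{U_0}(\mathbf w),|S_i|\bigr)$ can fix this, because the problem is the random deviation around the conditional mean, not the mean itself. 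The paper gets around this not by concentration of individual $e_{k,i}$ but by choosing the $Z_{k,i}$ to differ by a \emph{single swap} as $i$ increments; then each increment $\Delta_{k,i}=e_{k,i}-e_{k,i-1}$ has Lipschitz constant $O(1)$ and hence genuine $O(\sqrt n)$ fluctuation. One then argues (part~4 of \cref{lem:per-k}) that the rare large increments have small total volume, so most consecutive values stay $\Theta(\sqrt n)$-spaced. Your proposal lacks this incremental structure, and without it the second-exposure step does not go through.
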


\begin{proof}[Proof of \cref{conj:NST} given \cref{lem:per-m}]
For any $U,W$ as in \cref{lem:per-m}, we have 
\[
\left|e\left(U\cup W\right)-e\left(U\right)\right|=O\left(n^{3/2}\right),
\]
because $\left|W\right|=O\left(\sqrt{n}\right)$ and each $w\in W$
has fewer than $n$ neighbours in $U\cup W$. That is to say, the
$\Omega\left(n^{3/2}\right)$ values of $e\left(U\cup Z\right)$ are
contained in an interval of length $O\left(n^{3/2}\right)$ centered
at $e\left(U\right)$. By applying \cref{lem:per-m} to $\Omega\left(\sqrt{n}\right)$
values of $m$ each separated by a sufficiently large multiple of
$n^{3/2}$, we therefore get $\Omega\left(n^{2}\right)$ different
subgraph sizes.
\end{proof}
The sets $W$ in \cref{lem:per-m} will be comprised of multiple disjoint
subsets $S,T,X$ with different roles. Roughly speaking, given some
$Z\subseteq W$ containing exactly one element of $X$, we will be
able to increase the number of edges in $e\left(U\cup Z\right)$ by
$\Theta\left(n\right)$ by adding an element of $S$ to $Z$, we will
be able to increase the number of edges by $\Theta\left(\sqrt{n}\right)$
by exchanging an element of $S$ in $Z$ with an element of $T$,
and we will be able to modify $e\left(U\cup Z\right)$ very finely
by $\Theta\left(\sqrt{n}\right)$ different amounts by making different
choices for the single element of $X$ in $Z$. With different combinations
of these operations we will be able to obtain $\Omega\left(n^{3/2}\right)$
different values of $e\left(U\cup Z\right)$.

As outlined in \cref{sec:outline}, due to some technical obstacles we were not actually able
to construct sets $U,S,T,X$ that give us control over subgraph sizes
in such a simplistic way. Perhaps our most important new idea, which
gives us a lot of flexibility, is that we may allow $S,T,X$ to be
sets of disjoint \emph{pairs} of vertices rather than just vertex
sets. The following lemma will be a starting point for our construction.
\begin{lem}
\label{lem:sub-per-m}For any $C$ there is $c>0$ such that the following holds. For
any $n$-vertex $C$-Ramsey graph $G$ and any $m$ satisfying $cn^{2}\le m\le2cn^{2}$,
there is a vertex set $U_{0}\subseteq V$ with $\left|e\left(U_{0}\right)-4m\right|=O\left(n^{3/2}\right)$
and sets $S,T,X$ of size $\Theta\left(\sqrt{n}\right)$ such that
\begin{enumerate}
\item either $U_{0},S,T,X\subseteq V$ are disjoint sets of vertices or
$S,T,X\subseteq\binom{V}{2}$ are sets of disjoint \emph{pairs} of
vertices such that no vertex appears in more than one of $U_{0},S,T,X$;
\item there is $d=\Theta\left(n\right)$ such that $d_{U_{0}}\left(\boldsymbol{x}\right)=d+O\left(\sqrt{n}\right)$
for each $\boldsymbol{x}\in S\cup T\cup X$;
\item the degrees from $S$ into $U_{0}$ are smaller by $\Omega\left(\sqrt n\right)$
than the degrees from $T$ into $U_{0}$ (that is, $\min_{\boldsymbol{x}\in T}d_{U_{0}}\left(\boldsymbol{x}\right)-\max_{\boldsymbol{x}\in S}d_{U_{0}}\left(\boldsymbol{x}\right)=\Omega\left(\sqrt n\right)$);
\item for each $\left\{ \boldsymbol{x},\boldsymbol{y}\right\} \in\binom{X}{2}$,
we have $\left|N_{U_{0}}\left(\boldsymbol{x}\right)\triangle N_{U_{0}}\left(\boldsymbol{y}\right)\right|=\Omega\left(n\right)$.
\end{enumerate}
\end{lem}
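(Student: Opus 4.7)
The plan is to begin by applying \cref{lem:rich} (with a small $\delta>0$) to reduce to a $(\delta,\varepsilon)$-rich induced subgraph on $\Theta(n)$ vertices; by \cref{lem:ES} this subgraph has edge density in $[\varepsilon,1-\varepsilon]$ and by \cref{lem:diverse-pairs} it is both diverse and pair-diverse. Henceforth I would work inside this subgraph, so without loss of generality $G$ itself is $(\delta,\varepsilon)$-rich on $n$ vertices. I would then sample $U_0$ as a $p$-random subset of $V$, with $p = p(m) \in (\varepsilon_1,1-\varepsilon_1)$ chosen so that $\E[e(U_0)] = 4m$; such a $p$ exists because, taking $c$ small enough, $4m/\binom{n}{2} \in [8c,16c]$ lies in the feasible density range. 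The Azuma\textendash Hoeffding inequality applied to the $O(n)$-Lipschitz edge-count function gives $|e(U_0)-4m| = O(n^{3/2})$ with high probability.

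To find $S,T,X$ inside $V\setminus U_0$, I would study the degrees $d_{U_0}(v)$ for $v \notin U_0$, which lie in $[0,|U_0|]$. Pigeonholing these into buckets of width $\sqrt n$ produces a window $I = [d, d+K\sqrt n]$ of length $K\sqrt n$ (for a large constant $K$) containing $\Omega(K\sqrt n)$ vertices, with $d = \Theta(n)$ by the density bound. I would then split into two cases. In the \emph{single-vertex} case, the lower and upper thirds of $I$ each already contain $\Omega(\sqrt n)$ vertices, and I would take (size-$\sqrt n$ refinements of) them as $S$ and $T$, automatically ensuring condition~(3). In the \emph{pair} case (when the single-vertex split fails), iterating the zoom argument shows that $\Omega(n)$ vertices $v$ have $d_{U_0}(v)$ crammed in some window of length $O(\sqrt n)$; their $\Theta(n^2)$ pair-sums $d_{U_0}(\{x_1,x_2\})$ then also lie in a window of length $O(\sqrt n)$. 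Anticoncentration for $d_{U_0}(v)$ (via \cref{lem:ELO} and the richness of $G$) prevents too many of those $\Omega(n)$ vertices from sharing a common $d_{U_0}$-value, so with high probability the pool's degrees themselves span $\Omega(\sqrt n)$ distinct values; splitting the pool at its median degree then yields two pair-sum values separated by $\Omega(\sqrt n)$ that each receive $\Omega(n^{3/2})$ pairs, and a greedy matching / edge-coloring argument on the corresponding pair-graphs extracts $\Omega(\sqrt n)$ \emph{vertex-disjoint} pairs for $S$ and $T$.

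To construct $X$, I would start with a pool of $\Omega(\sqrt n)$ candidate (pairs of) vertices whose degrees lie in the target window and which are disjoint from $U_0 \cup S \cup T$. \cref{lem:diverse-pairs}(1) in the vertex case (resp.\ (2) in the pair case) ensures that all but $n^{1+\delta}$ pairs of candidates $\boldsymbol x,\boldsymbol y$ satisfy $|N(\boldsymbol x)\triangle N(\boldsymbol y)| = \Omega(n)$ in $G$; a Chernoff-based concentration argument lifts this to $|N_{U_0}(\boldsymbol x)\triangle N_{U_0}(\boldsymbol y)| = \Omega(n)$ for almost all such pairs once $U_0$ is exposed, and Tur\'an's theorem extracts the desired $X$ of size $\Omega(\sqrt n)$ satisfying condition~(4). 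The main obstacle I anticipate is the pair case: simultaneously arranging vertex-disjointness of the selected pairs, the $\Omega(\sqrt n)$ pair-sum separation between $S$ and $T$, and the diversity condition for $X$ with respect to $N_{U_0}$ (rather than just $N$), all while keeping everything disjoint from $U_0$. Pair-diversity (\cref{lem:diverse-pairs}(2)) together with the anticoncentration estimate \cref{lem:ELO} are the key tools for overcoming this difficulty.
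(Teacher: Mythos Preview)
There is a genuine gap in your construction of $X$. You start from a candidate pool of size $\Theta(\sqrt n)$ and appeal to diversity plus Tur\'an's theorem to extract $X$ satisfying condition~(4). But $(\varepsilon/2,\delta)$-diversity (and likewise pair-diversity) only bounds the maximum degree of the ``small-symmetric-difference'' graph by $n^\delta$, so Tur\'an yields an independent set of size only $\Omega(\sqrt n / n^\delta)=\Omega(n^{1/2-\delta})$, which is $o(\sqrt n)$ for every fixed $\delta>0$. (Your phrase ``all but $n^{1+\delta}$ pairs of candidates'' already signals trouble: a pool of $\Theta(\sqrt n)$ elements contains only $O(n)$ pairs in total.) This $n^\delta$ loss is precisely the obstruction discussed in \cref{sec:outline}, and removing it is the whole point of the lemma; with the loss in place one only recovers the $n^{2-o(1)}$ bound of \cite{NST16}, not $|X|=\Theta(\sqrt n)$.

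The paper circumvents this by reversing your order of operations and pigeonholing on \emph{pair}-degree sums $d(x_1)+d(x_2)$ in $G$ \emph{before} sampling $U_0$. There are $\Theta(n^2)$ such sums in $\{0,\dots,2n\}$, so some $\sqrt n$-window receives $\Omega(n^{3/2})$ pairs; viewing these pairs as a graph $H'$ on $V$, either some vertex has degree $\Omega(n^{3/4})$ (its neighbourhood becomes $L$: the single-vertex case) or $H'$ contains a matching of size $\Omega(n^{3/4})$ (the matching becomes $L$: the pair case). Either way $|L|=\Omega(n^{3/4})$, so Tur\'an on the small-symmetric-difference graph still leaves $|A|=\Omega(n^{3/4}/n^{\delta})\ge\Omega(\sqrt n)$ once $\delta\le 1/4$. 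Only then is $U_0$ sampled, and the $\Omega(\sqrt n)$ gap between $S$ and $T$ comes not from a geometric thirds-split of a window but from using \cref{lem:ELO} to find $\Omega(\sqrt n)$ elements of $A$ with pairwise \emph{distinct} values of $d_{U_0}$ and then taking the bottom and top thirds of that ordering. Your zoom-to-$\Omega(n)$-vertices argument in the pair case is also unconvincing as written (the window you start from holds only $\Theta(K\sqrt n)$ vertices, not $\Omega(n)$), but that issue becomes moot once the pair-sum pigeonhole is adopted.
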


Note that when we write a variable name in bold, it may be a single
vertex or a pair of vertices. Also, we emphasise that we are thinking of $C$ as a fixed constant, so the constants implied by the asymptotic notation in \cref{lem:sub-per-m} may depend on $C$ (but nothing else).

We will prove \cref{lem:sub-per-m} in \cref{sec:sub-per-m}. Without
going into too much detail about the proof, the idea is to first apply \cref{lem:rich} to reduce to an induced subgraph with rich neighbourhoods, then use the pigeonhole principle to find a large set $L$ of either vertices or pairs of vertices with very similar degrees. Then, we choose $U_{0}$ randomly and choose $S,T,X\subseteq L$ based on this random
outcome to satisfy the properties in \cref{lem:sub-per-m}.

Now, consider a $C$-Ramsey graph $G$, let $c$ be as in \cref{lem:sub-per-m}, and consider some $m$ satisfying $cn^2\le m\le 2cn^2$. Apply \cref{lem:sub-per-m} to obtain sets $U_{0},S,T,X$, and let $c'$ be a constant such that
\begin{equation}\min_{\boldsymbol{x}\in T}d_{U_{0}}\left(\boldsymbol{x}\right)-\max_{\boldsymbol{x}\in S}d_{U_{0}}\left(\boldsymbol{x}\right)\ge8c'\sqrt n.\label{eq:sep}\end{equation}
(Such a constant exists by the thid property of \cref{lem:sub-per-m}).

Fix an ordering of the elements of $S$ and of $T$, and let $\mathcal{P}$
be the set of pairs $\left(k,i\right)\in\ZZ^{2}$ with $c'\sqrt{n}\le k\le2c'\sqrt{n}$ and $0\le i\le c'\sqrt n$. For each $\left(k,i\right)\in\mathcal{P}$, define the set $Z_{k,i}$
to contain the vertices of the first $k-i$ elements from $S$ and
the first $i$ elements from $T$. Note that $e\left(Z_{k,0}\cup U_{0}\right)-e\left(Z_{k-1,0}\cup U_{0}\right)=\Theta\left(n\right)$
for each $k$, because $d_{U_{0}}\left(\boldsymbol{x}\right)=\Theta\left(n\right)$
for each $\boldsymbol{x}\in S$ by the second property of \cref{lem:sub-per-m}.
Also note that $e\left(Z_{k,i}\cup U_{0}\right)-e\left(Z_{k,i-1}\cup U_{0}\right)=\Theta\left(\sqrt{n}\right)$
for each $k,i$, by the second property of \cref{lem:sub-per-m}, \cref{eq:sep} and the fact that $e\left(Z_{k,i}\right)-e\left(Z_{k,i-1}\right)\le 2c'\sqrt{n}$.
Therefore, as $\left(k,i\right)$ varies lexicographically, the $e\left(Z_{k,i}\cup U_{0}\right)$
comprise $\Omega\left(n\right)$ roughly evenly spaced values.

Now, let $U$ be a random subset of $U_0$, where each vertex is present with probability $1/2$ independently. We would like some approximation of the spacing described above still to hold for the
collection of random values $e\left(U_{k,i}\right)$, where $U_{k,i}=Z_{k,i}\cup U$.
Also, we want to use the randomness of $U$ to show that for most
$k,i$, the $\boldsymbol{x}\in X$ have $\Omega\left(\sqrt{n}\right)$
different degrees $d_{U_{k,i}}\left(\boldsymbol{x}\right)$ into $U_{k,i}$.
In \cref{sec:per-k} we will prove the following lemma, from which
\cref{lem:per-m} will easily follow.
\begin{lem}
\label{lem:per-k}Let $G$ be a $C$-Ramsey graph, let $S,T,X,d$ be obtained from an application of \cref{lem:sub-per-m}, and let $Z_{k,i},U,U_{k,i}$ be as defined above.
Then, there are constants $M,\beta,Q>0$ (with $Q\ge 3\beta$) such that for
each $c'\sqrt{n}\le k\le2c'\sqrt{n}$, the following hold.
\begin{enumerate}
\item With probability at least 0.99, there is a set $\mathcal{I}_{k}$
of $\left(1-\beta/\left(2M\right)\right)c'\sqrt{n}$ values of $i$
with the following property. For each $i\in\mathcal{I}_{k}$, there
is a set $X_{k,i}\subseteq X$ of size $\Omega\left(\sqrt{n}\right)$
such that the $d_{U_{k,i}}\left(\boldsymbol{x}\right)$, for $\boldsymbol{x}\in X_{k,i}$,
are distinct, and all lie in the interval between $d/2-Q\sqrt{n}$
and $d/2+Q\sqrt{n}$.
\item Let $e_{k,i}=e\left(Z_{k,i}\right)+e\left(Z_{k,i},U\right)=e\left(U_{k,i}\right)-e\left(U\right)$.
With probability at least 0.99,
\[
\left|e_{k,0}-\E e_{k,0}\right|\le Qn.
\]
\item With probability at least $1/2$, $e_{k,c'\sqrt{n}}-e_{k,0}\ge3\beta n$.
\item Let $\Delta_{k,i}=e_{k,i}-e_{k,i-1}$. With probability at least 0.99,
\[
\sum_{\left|\Delta_{k,i}\right|\ge M\sqrt{n}}\left|\Delta_{k,i}\right|\le\beta n.
\]
(That is to say, the ``unusually large'' increments $\Delta_{k,i}$
have low total volume).
\end{enumerate}
\end{lem}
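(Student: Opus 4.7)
The plan is to exploit that $U$ is a random subset of $U_{0}$, obtained by including each vertex of $U_{0}$ independently with probability $1/2$, so that every quantity of interest is a sum of independent bounded Bernoullis indexed by $v\in U_{0}$. Conclusions (2)--(4) will then follow from first- and second-moment computations together with Chebyshev's and Markov's inequalities, while conclusion (1), which is the real obstacle, will additionally require the Littlewood--Offord estimate \cref{lem:ELO} to rule out coincidences among the random degrees $d_{U_{k,i}}\left(\boldsymbol{x}\right)$ on $X$.

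Taking parts (2)--(4) first: for (2), the random part of $e_{k,0}$ is $\sum_{v\in U_{0}}d_{Z_{k,0}}\left(v\right)\one\left[v\in U\right]$, which has variance at most $\left|U_{0}\right|\cdot\left|Z_{k,0}\right|^{2}=O\left(n^{2}\right)$ since $|Z_{k,0}|=O(\sqrt n)$, and Chebyshev gives the required $O\left(n\right)$ deviation with probability at least $0.99$ once $Q$ is large enough. For (3), I would appeal to \eqref{eq:sep}: each of the $c'\sqrt{n}$ swaps replacing $\boldsymbol{s}_{i}\in S$ by $\boldsymbol{t}_{i}\in T$ adds in expectation $\left(d_{U_{0}}\left(\boldsymbol{t}_{i}\right)-d_{U_{0}}\left(\boldsymbol{s}_{i}\right)\right)/2\ge4c'\sqrt{n}$ to $e_{k,i}-e_{k,i-1}$, producing total expected gain $\ge4\left(c'\right)^{2}n$, and (after absorbing the deterministic $O(n)$ contribution from $e(Z_{k,c'\sqrt{n}})-e(Z_{k,0})$) Chebyshev with variance $O\left(n^{2}\right)$ forces a lower bound of $3\beta n$ with probability at least $1/2$ provided $\beta$ is taken small relative to $\left(c'\right)^{2}$. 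For (4), each $\Delta_{k,i}$ has mean $O\left(\sqrt{n}\right)$ and variance $O\left(n\right)$, so $\E\Delta_{k,i}^{2}=O(n)$ and $\E\sum_{i}\Delta_{k,i}^{2}=O\left(n^{3/2}\right)$; Markov's inequality then yields $\sum_{i}\Delta_{k,i}^{2}=O\left(n^{3/2}\right)$ with probability at least $0.99$, and so
\[
\sum_{\left|\Delta_{k,i}\right|\ge M\sqrt{n}}\left|\Delta_{k,i}\right|\le\frac{1}{M\sqrt{n}}\sum_{i}\Delta_{k,i}^{2}=\frac{O\left(n\right)}{M}\le\beta n
\]
once $M$ is chosen sufficiently large.

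The main obstacle is (1). For any $\left\{ \boldsymbol{x},\boldsymbol{y}\right\} \in\binom{X}{2}$ the difference
\[
d_{U_{k,i}}\left(\boldsymbol{x}\right)-d_{U_{k,i}}\left(\boldsymbol{y}\right)=\left(d_{Z_{k,i}}\left(\boldsymbol{x}\right)-d_{Z_{k,i}}\left(\boldsymbol{y}\right)\right)+\sum_{v\in U_{0}}\left(\one_{N\left(\boldsymbol{x}\right)}\left(v\right)-\one_{N\left(\boldsymbol{y}\right)}\left(v\right)\right)\one\left[v\in U\right]
\]
decomposes into an $i$-dependent deterministic shift plus a random part of $\left(n,1/2\right)$-Littlewood--Offord type with at least $\left|N_{U_{0}}\left(\boldsymbol{x}\right)\triangle N_{U_{0}}\left(\boldsymbol{y}\right)\right|=\Omega\left(n\right)$ nonzero coefficients, using the fourth property of \cref{lem:sub-per-m}. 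By \cref{lem:ELO} the collision probability is thus $O\left(1/\sqrt{n}\right)$, so writing $B_{k,i}$ for the number of such collisions in $\binom X 2$ and $C_{k,i}$ for the number of $\boldsymbol{x}\in X$ with $\left|d_{U_{k,i}}\left(\boldsymbol{x}\right)-d/2\right|>Q\sqrt{n}$, we have $\E B_{k,i}=O\left(\left|X\right|^{2}/\sqrt{n}\right)=O\left(\sqrt{n}\right)$ and (by Chebyshev applied to $\Var(d_{U}(\boldsymbol{x}))=O(n)$) $\E C_{k,i}=O\left(\left|X\right|/Q^{2}\right)$. Summing over $i$ and applying Markov's inequality, with probability at least $0.99$ I would define $\mathcal{I}_{k}$ as the set of those $i$ for which $B_{k,i}$ is at most a large constant multiple of $\left|X\right|$ and $C_{k,i}\le\left|X\right|/2$, losing at most $\left(\beta/\left(2M\right)\right)c'\sqrt{n}$ values of $i$ by taking $Q$ and the $B_{k,i}$-threshold sufficiently large relative to $M,\beta$; then Tur\'an's theorem applied to the collision graph on the $\ge\left|X\right|/2$ in-range elements of $X$ yields, for each $i\in\mathcal{I}_{k}$, an independent set $X_{k,i}$ of size $\Omega(\left|X\right|)=\Omega(\sqrt{n})$ with pairwise distinct in-range degrees, as required. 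The most delicate point will be choosing the constants $Q,M,\beta$ consistently so that all four probabilistic conclusions hold at the stated confidence levels.
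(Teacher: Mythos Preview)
Your plan is sound and largely matches the paper. Parts (1) and (2) follow exactly as you describe, via Littlewood--Offord plus Tur\'an and via Chebyshev respectively; these are the same arguments the paper gives. Your Part (4) is actually \emph{simpler} than the paper's: the paper invokes Azuma--Hoeffding to get sub-Gaussian tails for $\Delta_{k,i}$ and then integrates $\Pr(|\Delta_{k,i}|\ge t)$ to bound $\E[|\Delta_{k,i}|\one_{|\Delta_{k,i}|\ge M\sqrt n}]$, whereas your second-moment bound $\E\sum_i\Delta_{k,i}^2=O(n^{3/2})$ followed by the pointwise inequality $|x|\le x^2/(M\sqrt n)$ on the large terms is more elementary and yields the same conclusion.

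However, your Part (3) has a genuine gap. The random part of $e_{k,c'\sqrt n}-e_{k,0}$ is $\sum_{v\in U_0}a_v\one[v\in U]$ with $a_v=d_{Z_{k,c'\sqrt n}}(v)-d_{Z_{k,0}}(v)$ and $|a_v|\le 4c'\sqrt n$, and there is no cancellation forced among the $a_v$; so the variance is genuinely of order $(c')^2n^2$ and the standard deviation is of order $c'n$. But the mean you compute is only of order $(c')^2 n$. If $c'$ is small --- and nothing prevents this: $c'$ arises from the $\Omega(\sqrt n)$ separation in property (3) of \cref{lem:sub-per-m} and may be an arbitrarily small constant depending on $C$ --- then Chebyshev at confidence $1/2$ only controls deviations down to $\sqrt 2\cdot\Theta(c'n)$, which exceeds the mean, and you cannot conclude $e_{k,c'\sqrt n}-e_{k,0}\ge 3\beta n$ for any positive $\beta$.

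The paper sidesteps this by exploiting the special feature $p=1/2$: since replacing each indicator $\xi_v=\one[v\in U]$ by $1-\xi_v$ preserves the distribution and sends the random sum $X$ to $2\E X-X$, the distribution of $X$ is symmetric about $\E X$, giving $\Pr(X\ge\E X)\ge 1/2$ with no appeal to the variance at all. One then simply takes $\beta=2(c')^2/3$ so that $3\beta n$ equals the lower bound on the expectation. With this fix your argument goes through, and your ordering of constants ($\beta$ from (3), then $M$ from (4), then $Q$ from (1)--(2)) is consistent with the paper's.
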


Again we emphasise that we are thinking of $C$ as fixed, so $M,\beta,Q$ may depend on $C$, via $c'$ and the constants implied by the asymptotic notation in \cref{lem:sub-per-m}.

Now we can prove \cref{lem:per-m} given \cref{lem:sub-per-m,lem:per-k}.
\begin{proof}[Proof of \cref{lem:per-m}]
Apply \cref{lem:sub-per-m} to obtain $U_{0},S,T,X$, and define $\mathcal{P},U,Z_{k,i},U_{k,i}$
as above. We will prove the statement of the lemma for $W$ being
the set of all the vertices in $S\cup T\cup X$.

For each $c'\sqrt{n}\le k\le2c'\sqrt{n}$, with probability at least
0.4, all four parts of \cref{lem:per-k} are satisfied. Let $\mathcal{K}$
be the set of such $k$. Then $\E\left[c'\sqrt{n}-\left|\mathcal{K}\right|\right]\le\left(0.6\right)c'\sqrt{n}$,
so by Markov's inequality, with probability at least $1-0.6/0.8=1/4$
we have $\left|\mathcal{K}\right|\ge\left(0.2\right)c'\sqrt{n}$.
Also, note that $\left|\E e\left(U\right)-m\right|=O\left(n^{3/2}\right)$
and there are $O\left(n^{3}\right)$ pairs of edges in $G$ whose
presence in $G\left[U\right]$ are dependent (this can only occur
if they share a vertex). So, $\Var e\left(U\right)=O\left(n^{3}\right)$
and by Chebyshev's inequality, $\left|m-e\left(U\right)\right|=O\left(n^{3/2}\right)$
with probability at least 0.9. Fix an outcome of $U$ satisfying both
these events, which hold together with probability at least $0.15$.

For each $k$, we have 
\[
\E e_{k,0}-\E e_{k-1,0}=\left(e\left(Z_{k,0}\cup U_{0}\right)-e\left(Z_{k-1,0}\cup U_{0}\right)\right)/2=\Theta\left(n\right),
\]
since $\E d_{U}\left(\boldsymbol{x}\right)=d_{U_{0}}\left(\boldsymbol{x}\right)/2$
for each $\boldsymbol{x}\in S$. Let $\mathcal{K}'\subseteq\mathcal{K}$
contain every $q$th element of $\mathcal{K}$, for sufficiently large
$q$ such that the values of $\E e_{k,0}$, for $k\in\mathcal{K}'$,
are separated by at least $4Qn$. Then, $\left|\mathcal{K}'\right|=\Theta\left(\sqrt{n}\right)$,
and by part 2 of \cref{lem:per-k}, the values of $e_{k,0}$, for $k\in\mathcal{K}'$,
are separated by at least $2Qn$.

Next, by part 3 of \cref{lem:per-k} we have $e_{k,c'\sqrt{n}}-e_{k,0}\ge3\beta n$
for each $k\in\mathcal{K}'$. Consider the range of integers between
$e_{k,0}$ and $e_{k,0}+3\beta n$, and within this range consider
$2\beta\sqrt{n}/M$ intervals of length $M\sqrt{n}$, each separated
by a distance of $M\sqrt{n}/2=\Omega\left(\sqrt{n}\right)$. By part
4 of \cref{lem:per-k}, at most $\beta\sqrt{n}/M$ of these intervals
contain no value of $e_{k,i}$. Consider a representative $e_{k,i}$
from $\beta\sqrt{n}/M$ different intervals, and let $\mathcal{I}_{k}'$
be the set of corresponding indices $i$. Let $\mathcal{I}_{k}''=\mathcal{I}_{k}\cap\mathcal{I}_{k}'$,
so that $\left|\mathcal{I}_{k}''\right|\ge\beta\sqrt{n}/M-\beta c'\sqrt{n}/\left(2M\right)\ge\beta\sqrt{n}/\left(2M\right)$.
By construction, $\left|e_{k,i}-e_{k,0}\right|\le3\beta n$ for each
$i\in\mathcal{I}_{k}''$, and the values of $e_{k,i}$, for $i\in\mathcal{I}_{k}''$,
are separated by $\Omega\left(\sqrt{n}\right)$. We are assuming that
$Q\ge3\beta$, so among choices of $k\in\mathcal{K}'$, $i\in\mathcal{I}_{k}''$,
we already have a total of $\Omega\left(n\right)$ different values
of $e\left(U_{k,i}\right)$ separated by $\Omega\left(\sqrt{n}\right)$.

Now, consider every $q$th of these values (in increasing order),
for sufficiently large $q$ such that each resulting pair of values
are separated by at least $2Q\sqrt{n}$. Let $\mathcal{P}'\subseteq\left\{ \left(k,i\right):k\in\mathcal{K}',i\in\mathcal{I}_{k}''\right\} $
be the corresponding set of indices (so $\left|\mathcal{P}'\right|=\Omega\left(n\right)$).
For each $\left(k,i\right)\in\mathcal{P}'$, we have $i\in\mathcal{I}_{k}$,
so by part 1 of \cref{lem:per-k}, there is $X_{k,i}$
such that the values $d_{U_{k,i}}\left(\boldsymbol{x}\right)$, for
$\boldsymbol{x}\in X_{k,i}$, are all different, yet are all in a fixed
interval of length $2Q\sqrt{n}$. Therefore, among choices of $\left(k,i\right)\in\mathcal{P}'$
and $\boldsymbol{x}\in X_{k,i}$, there are $\Omega\left(n^{3/2}\right)$
different values of $e\left(U_{k,i}\cup\boldsymbol{x}\right)=e\left(U\cup\left(Z_{k,i}\cup\boldsymbol{x}\right)\right)$,
as desired.
\end{proof}

\subsection{\label{sec:sub-per-m}Proof of \texorpdfstring{\cref{lem:sub-per-m}}{Lemma~\ref{lem:sub-per-m}}}
\begin{proof}[Proof of \cref{lem:sub-per-m}]
First, consider $\varepsilon=\varepsilon\left(C\right)$ from \cref{lem:rich}
and note that we can assume $G$ is $\left(\delta,\varepsilon\right)$-rich,
for $\delta=\varepsilon/4$. To see this, first apply \cref{lem:rich}
to obtain a $\Omega\left(n\right)$-vertex $\left(\delta,\varepsilon\right)$-rich
induced subgraph $G\left[V'\right]\subseteq V$ . Since $\log\left|V'\right|\ge\left(1/2\right)\log n$,
$G\left[V'\right]$ is still $2C$-Ramsey, so by tweaking some constants
it suffices to find our desired sets $U_{0},S,T,X$ inside $G\left[V'\right]$.

So, we make the aforementioned richness assumption. By \cref{lem:diverse-pairs}
with $\alpha=\varepsilon/2$, this means that $G$ is both $\varepsilon/2$-diverse
and $\left(\varepsilon^{2}/4,\varepsilon/2\right)_{2}$-diverse,
and there are at most $n^{1+1/5}$ pairs of vertices $\left\{ x_{1},x_{2}\right\} $
with $\left|N\left(x_{1}\right)\triangle\overline{N\left(x_{2}\right)}\right|<\left(\varepsilon/2\right)n$.
Note that each of the $\Omega\left(n^{2}\right)$ sums $d\left(\boldsymbol{x}\right)=d\left(x_{1}\right)+d\left(x_{2}\right)$,
for $\boldsymbol{x}=\left\{ x_{1},x_{2}\right\} \in\binom{V}{2}$,
lie between 0 and $2n$, so by the pigeonhole principle there is some
$d'$ and a collection of $\Omega\left(n^{3/2}\right)$ pairs $H\subseteq\binom{V}{2}$
such that $d\left(\boldsymbol{x}\right)=d'+O\left(\sqrt{n}\right)$
for all $\boldsymbol{x}\in H$. Interpret $H$ as a graph on the vertex
set $V$ with $\Omega\left(n^{3/2}\right)$ edges, and obtain a further
graph $H'$ by deleting the $O\left(n^{1+1/5}\right)=o\left(n^{3/2}\right)$
edges $\left\{ x_{1},x_{2}\right\} $ with $\left|N\left(x_{1}\right)\triangle\overline{N\left(x_{2}\right)}\right|<\left(\varepsilon/2\right)n$.
Now, $H'$ either has a vertex $v$ with $d\left(v\right)=\Omega\left(n^{3/4}\right)$
or it has a matching with $\Omega\left(n^{3/4}\right)$ edges. In
the former case let $d''=d'-d\left(v\right)$ and let $L\subseteq N_{H}\left(v\right)$
be a set of $\Omega\left(n^{3/4}\right)$ neighbours of $v$ in $H$.
In the latter case let $d''=d'$ and let $L$ be a set of $\Omega\left(n^{3/4}\right)$
pairs comprising a matching in $H'$. In both cases, for each $\boldsymbol{x}\in L$,
we have $d\left(\boldsymbol{x}\right)=d''+O\left(\sqrt{n}\right)$.

Next, let $F\subseteq\binom{L}{2}$ be the set of $\left\{ \boldsymbol{x},\boldsymbol{y}\right\} \in\binom{L}{2}$
with $\left|N\left(\boldsymbol{x}\right)\triangle N\left(\boldsymbol{y}\right)\right|<\left(\varepsilon^{2}/4\right)n$.
By one of our two diversity assumptions, interpreting $F$ as a graph,
it has $\left|L\right|=\Omega\left(n^{3/4}\right)$ vertices and maximum
degree at most $n^{1/5}$, so by Tur\'an's theorem
it has an independent set $A$ with size $\Omega\left(\left|L\right|/n^{1/5}\right)=\Omega\left(\sqrt{n}\right)$.
That is to say, for every $\left\{ \boldsymbol{x},\boldsymbol{y}\right\} \in\binom{A}{2}$,
we have $\left|N\left(\boldsymbol{x}\right)\triangle N\left(\boldsymbol{y}\right)\right|=\Omega\left(n\right)$.

Now, by \cref{lem:ES} and the $C$-Ramsey property, $e\left(G\right)\ge800cn^{2}$
for some $c>0$. For $cn^{2}\le m\le2cn^{2}$, let $p=\sqrt{4m/e\left(G\right)}$
(so $p=\Omega\left(1\right)$ and $p\le0.1$), and let $U_{0}$ be
a random subset of $V$ obtained by including each element with probability
$p$ independently. We make a few observations.
\begin{claim*}
The following five events each hold with probability greater than
$4/5$.
\begin{enumerate}
\item $\left|e\left(U_{0}\right)-4m\right|=O\left(n^{3/2}\right)$;
\item there is $Q\subseteq A$ involving no vertices of $U_{0}$, with $\left|Q\right|\ge\left(2/3\right)\left|A\right|$;
\item there is $R\subseteq A$ with $\left|R\right|\ge\left(2/3\right)\left|A\right|$
and $d_{U_{0}}\left(\boldsymbol{x}\right)=pd''+O\left(\sqrt{n}\right)$
for each $\boldsymbol{x}\in R$;
\item $\left|N_{U_{0}}\left(\boldsymbol{x}\right)\triangle N_{U_{0}}\left(\boldsymbol{y}\right)\right|=\Omega\left(n\right)$
for each $\left\{ \boldsymbol{x},\boldsymbol{y}\right\} \in\binom{A}{2}$;
\item the equality $d_{U_{0}}\left(\boldsymbol{x}\right)=d_{U_{0}}\left(\boldsymbol{y}\right)$
holds for $O\left(\sqrt{n}\right)$ pairs $\left\{ \boldsymbol{x},\boldsymbol{y}\right\} \in\binom{A}{2}$.
\end{enumerate}
\end{claim*}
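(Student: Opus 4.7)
The plan is to prove each of the five events by a standard concentration argument on an appropriate random variable derived from $U_0$. In each case the only source of randomness is the collection of independent indicators $\one[v \in U_0]$ for $v \in V$.

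For event~1, $e(U_0)$ is a sum of edge-indicators with $\E e(U_0) = p^2 e(G) = 4m$ by the choice of $p$, and $\Var e(U_0) = O(n^3)$ since only the $O(n^3)$ pairs of edges sharing a vertex contribute to the variance. Chebyshev's inequality then gives the desired $O(n^{3/2})$ deviation. For event~2, recall that $A$ consists of vertex-disjoint pairs (or single vertices), so the events ``$\boldsymbol{x} \cap U_0 = \emptyset$'' are independent over $\boldsymbol{x} \in A$, each of probability at least $(1-p)^2 \ge 0.81$; a Chernoff bound delivers at least $(2/3)|A|$ such $\boldsymbol{x}$ with probability well above $4/5$. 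For event~3, fix $\boldsymbol{x} \in A$ and write $d_{U_0}(\boldsymbol{x})$ as a sum of independent Bernoullis (each with multiplicity at most $2$) with mean $pd(\boldsymbol{x}) = pd'' + O(\sqrt n)$ and variance $O(n)$. By Chebyshev, choosing a sufficiently large implicit constant makes the individual failure probability arbitrarily small, so the expected number of failing $\boldsymbol{x} \in A$ is an arbitrarily small fraction of $|A|$; Markov's inequality then supplies the desired $R$ with probability $> 4/5$.

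Event~4 rests on a useful multiset identity: for any $U_0 \subseteq V$, $N_{U_0}(\boldsymbol{x}) \triangle N_{U_0}(\boldsymbol{y}) = (N(\boldsymbol{x}) \triangle N(\boldsymbol{y})) \cap U_0$, because a vertex outside $U_0$ has multiplicity $0$ in both restricted neighbourhoods and so cannot contribute to the (restricted) symmetric difference. Since $|N(\boldsymbol{x}) \triangle N(\boldsymbol{y})| = \Omega(n)$ for every pair by the construction of $A$, the quantity $|N_{U_0}(\boldsymbol{x}) \triangle N_{U_0}(\boldsymbol{y})|$ is binomially distributed with mean $\Omega(n)$; a Chernoff bound gives exponentially small per-pair failure probability, which easily survives a union bound over the $O(|A|^2) = O(n)$ pairs. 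For event~5, the difference $d_{U_0}(\boldsymbol{x}) - d_{U_0}(\boldsymbol{y}) = \sum_v a_v \one[v \in U_0]$ has coefficients $a_v \in \{-2,-1,0,1,2\}$ equal to the signed multiplicity difference, with $a_v \ne 0$ precisely on $N(\boldsymbol{x}) \triangle N(\boldsymbol{y})$, a set of size $\Omega(n)$. After dropping the zero coefficients this is of $(\Omega(n),p)$-Littlewood\textendash Offord type, so \cref{lem:ELO} yields $\Pr[d_{U_0}(\boldsymbol{x}) = d_{U_0}(\boldsymbol{y})] = O(1/\sqrt n)$; the expected number of colliding pairs is therefore $O(|A|^2/\sqrt n) = O(\sqrt n)$, and Markov's inequality finishes.

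The only points requiring real ideas beyond generic concentration are the multiset identity in event~4, which converts the deterministic diversity $|N(\boldsymbol{x}) \triangle N(\boldsymbol{y})| = \Omega(n)$ into a clean lower bound for the restricted symmetric difference, and recognising the Littlewood\textendash Offord structure in event~5, which supplies the sharp $O(1/\sqrt n)$ anti-concentration needed to keep the expected collision count in $A$ down to $O(\sqrt n)$; without either one the corresponding event could not be established from the diversity assumption alone.
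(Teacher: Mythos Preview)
Your proof is correct and follows essentially the same approach as the paper's: for each event you use the same random variable, the same moment or tail bound, and the same finishing step (Chebyshev/Markov/Chernoff/union bound), including the key multiset identity $N_{U_0}(\boldsymbol{x})\triangle N_{U_0}(\boldsymbol{y})=(N(\boldsymbol{x})\triangle N(\boldsymbol{y}))\cap U_0$ for event~4 and the Littlewood\textendash Offord structure for event~5. The only cosmetic difference is that for event~2 you invoke Chernoff on the independent indicators $\one[\boldsymbol{x}\cap U_0=\emptyset]$ whereas the paper uses Chebyshev via $\Var|Q|=O(|A|)$, and you implicitly use $|A|=\Theta(\sqrt n)$ when asserting $|A|^2=O(n)$, which is harmless since one may truncate $A$ to that size.
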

\begin{proof}[Proof of claim]
For the first property, note that $\E e\left(G\left[U_{0}\right]\right)=4m$
and and there are $O\left(n^{3}\right)$ pairs of edges in $G$ whose
presence in $G\left[U_{0}\right]$ are dependent (this can only occur
if they share a vertex), so $\Var e\left(G\left[U_{0}\right]\right)=O\left(n^{3}\right)$.
The desired result then follows from Chebyshev's inequality, for a
sufficiently large constant implicit in ``$O\left(n^{3/2}\right)$''.

For the second property, note that the size of the subset $Q\subseteq A$
of elements of $A$ which contain no vertices of $U_{0}$ has mean
at least $\left(1-p\right)^{2}\left|A\right|$ and variance $O\left(\left|A\right|\right)$;
since $1-p\ge0.9$ the desired result again follows from Chebyshev's
inequality.

For the third property, for each $\boldsymbol{x}\in A$ we have $\E d_{U_{0}}\left(\boldsymbol{x}\right)=pd''+O\left(\sqrt{n}\right)$
and $\Var d_{U_{0}}\left(\boldsymbol{x}\right)=O\left(n\right)$,
so with at probability at least 0.99 we have $d_{U_{0}}\left(\boldsymbol{x}\right)=pd''+O\left(\sqrt{n}\right)$.
Let $R$ be the set of $\boldsymbol{x}$ satisfying this bound; we
have $\E\left|A\backslash R\right|=\left(0.01\right)\left|A\right|$,
so by Markov's inequality, with probability at least $4/5$ we have
$\left|A\backslash R\right|\le\left(1/3\right)\left|A\right|$.

For the fourth property, recall that $\left|N\left(\boldsymbol{x}\right)\triangle N\left(\boldsymbol{y}\right)\right|=\Omega\left(n\right)$
for each $\left\{ \boldsymbol{x},\boldsymbol{y}\right\} \in\binom{A}{2}$.
Note that $N_{U_{0}}\left(\boldsymbol{x}\right)\triangle N_{U_{0}}\left(\boldsymbol{y}\right)=\left(N\left(\boldsymbol{x}\right)\triangle N\left(\boldsymbol{y}\right)\right)\cap U_{0}$,
so that $\left|N_{U_{0}}\left(\boldsymbol{x}\right)\triangle N_{U_{0}}\left(\boldsymbol{y}\right)\right|$
has a binomial distribution with parameters $\left|N\left(\boldsymbol{x}\right)\triangle N\left(\boldsymbol{y}\right)\right|$
and $p$. Then, $\Pr\left(\left|N_{U_{0}}\left(\boldsymbol{x}\right)\triangle N_{U_{0}}\left(\boldsymbol{y}\right)\right|<\left(p/2\right)\left|N\left(\boldsymbol{x}\right)\triangle N\left(\boldsymbol{y}\right)\right|\right)=e^{-\Omega\left(n\right)}$
by the Chernoff bound, and the desired result follows from the union
bound.

For the fifth property, for $\left\{ \boldsymbol{x},\boldsymbol{y}\right\} \in\binom{A}{2}$,
note that the random variable $d_{U_{0}}\left(\boldsymbol{x}\right)-d_{U_{0}}\left(\boldsymbol{y}\right)$
is of $\left(\left|N\left(\boldsymbol{x}\right)\triangle N\left(\boldsymbol{y}\right)\right|,p\right)$-Littlewood\textendash Offord
type. So, recalling that $\left|N\left(\boldsymbol{x}\right)\triangle N\left(\boldsymbol{y}\right)\right|=\Omega\left(n\right)$,
we have $\Pr\left(d_{U_{0}}\left(\boldsymbol{x}\right)=d_{U_{0}}\left(\boldsymbol{y}\right)\right)=O\left(1/\sqrt{n}\right)$.
The expected number of pairs $\left\{ \boldsymbol{x},\boldsymbol{y}\right\} \in\binom{A}{2}$
satisfying $d_{U_{0}}\left(\boldsymbol{x}\right)=d_{U_{0}}\left(\boldsymbol{y}\right)$
is therefore $O\left(\sqrt{n}\right)$, and the desired result follows
from Markov's inequality.
\end{proof}
Fix an outcome of $U_{0}$ satisfying all 5 of the above properties,
and arbitrarily divide $R\cap Q$, which has size at least $\left|A\right|/3$,
into two subsets $Y$ and $X$ of size $\Omega\left(\sqrt{n}\right)$.
Consider the graph on the vertex set $Y$ of all $\left\{ \boldsymbol{x},\boldsymbol{y}\right\} \in\binom{Y}{2}$
with $d_{U_{0}}\left(\boldsymbol{x}\right)=d_{U_{0}}\left(\boldsymbol{y}\right)$.
This graph has $O\left(\sqrt{n}\right)$ edges, so by Tur\'an's theorem
it has an independent set $B$ of size $\Omega\left(\sqrt{n}\right)$.
Order the $\boldsymbol{x}\in B$ by $d_{U_{0}}\left(\boldsymbol{x}\right)$,
let $S$ be the first $\left|B\right|/3$ elements in this ordering
and let $T$ be the last $\left|B\right|/3$ elements. Since each
such $d_{U_{0}}\left(\boldsymbol{x}\right)$ is distinct, this means
$\min_{\boldsymbol{x}\in T}d_{U_{0}}\left(\boldsymbol{x}\right)-\max_{\boldsymbol{x}\in S}d_{U_{0}}\left(\boldsymbol{x}\right)\ge |B|/3=\Omega\left(\sqrt n\right)$.
Let $d=pd''$ and note that $d=\Omega\left(n\right)$, because otherwise
it would be impossible to simultaneously satisfy properties 3 and
4.
\end{proof}

\subsection{\label{sec:per-k}Proof of \texorpdfstring{\cref{lem:per-k}}{Lemma~\ref{lem:per-k}}}
\begin{proof}[Proof of \cref{lem:per-k}]
The constants $\beta,M,Q$ will be determined in that order, in terms of each other. Therefore it is convenient to prove the four parts of \cref{lem:per-k} in a slightly different order than they are stated.

For the third part, note that $\left|e\left(Z_{k,i}\right)-e\left(Z_{k,i-1}\right)\right|\le2c'\sqrt{n}$, and recall \cref{eq:sep}. We have
\[
\E\left[e_{k,i}-e_{k,i-1}\right]\ge\frac{1}{2}\left(e\left(Z_{k,i},U_{0}\right)-e\left(Z_{k,i-1},U_{0}\right)\right)-2c'\sqrt{n}\ge 2c'\sqrt n,
\]
for each $i$. Let $\beta=2(c')^2/3$, so $\Delta_{k}:=e_{k,c'\sqrt{n}}-e_{k,0}$
has expectation at least $3\beta n$. But $\Delta_{k}$
is of $\left(\Omega\left(n\right),1/2\right)$-Littlewood\textendash Offord
type and is therefore symmetrically distributed around its expectation.
The desired result follows.

For the fourth part, note that $\Delta_{k,i}$ has mean $O\left(\sqrt{n}\right)$
and is affected by 1 or 2 by the addition or removal of an element
to/from $U$. So, by the Azuma\textendash Hoeffding inequality, $\Pr\left(\left|\Delta_{k,i}\right|\ge t\right)=\exp\left(-\Omega\left(t^{2}/n\right)\right)$.
Now, for any nonnegative integer random variable $\xi$, we have $\E\xi=\sum_{t=1}^{\infty}\Pr\left(\xi\ge t\right)$,
so
\begin{align*}
\E\left[\left|\Delta_{k,i}\right|\one_{\left|\Delta_{k,i}\right|\ge M\sqrt{n}}\right] & =\sum_{t=1}^{\infty}\Pr\left(\left|\Delta_{k,i}\right|\one_{\left|\Delta_{k,i}\right|\ge M\sqrt{n}}\ge t\right)
\\
 & =M\sqrt{n}\Pr\left(\left|\Delta_{k,i}\right|\ge M\sqrt{n}\right)+\sum_{t=M\sqrt{n}}^{\infty}\Pr\left(\left|\Delta_{k,i}\right|\ge t\right)\\
 & =M\sqrt{n}e^{-\Omega\left(M^{2}\right)}+\sum_{t=M\sqrt{n}}^{\infty}\exp\left(-\Omega\left(t^{2}/n\right)\right)=e^{-\Omega\left(M^{2}\right)}\sqrt{n},
\end{align*}
uniformly over $M$. The desired result follows for sufficiently large
$M$, by linearity of expectation and Markov's inequality.

Now we prove the first part. For each $k,i$, and each $\left\{ \boldsymbol{x},\boldsymbol{y}\right\} \in\binom{X}{2}$,
the random variable $d_{U_{k,i}}\left(\boldsymbol{x}\right)-d_{U_{k,i}}\left(\boldsymbol{y}\right)$
is of $\left(\left|N_{U_{0}}\left(\boldsymbol{x}\right)\triangle N_{U_{0}}\left(\boldsymbol{y}\right)\right|,1/2\right)$-Littlewood\textendash Offord
type. So, we have $\Pr\left(d_{U_{k,i}}\left(\boldsymbol{x}\right)=d_{U_{k,i}}\left(\boldsymbol{y}\right)\right)=O\left(1/\sqrt{n}\right)$.
Let $H_{k,i}$ be the graph of pairs $\left\{ \boldsymbol{x},\boldsymbol{y}\right\} \in\binom{X}{2}$
satisfying $d_{U_{k,i}}\left(\boldsymbol{x}\right)=d_{U_{k,i}}\left(\boldsymbol{y}\right)$,
so we have $\E e\left(H_{k,i}\right)=O\left(\sqrt{n}\right)$. By
Markov's inequality, with probability at least $1-\beta/\left(400M\right)$
we have $e\left(H_{k,i}\right)=O\left(\sqrt{n}\right)$, in which
case by Tur\'an's theorem $H_{k,i}$ has an independent set $Y_{k,i}$
of size $2\gamma\sqrt{n}$, for some constant $\gamma=\gamma(\beta,M)>0$. The expected proportion
of values of $i$ for which this fails to occur is $\beta/\left(400M\right)$,
and by Markov's inequality again, with probability at least 0.995
it fails for only a $\beta/\left(2M\right)$ proportion.

Also, for each $\boldsymbol{x}\in X$, we have $\E d_{U}\left(\boldsymbol{x}\right)=d/2+O\left(\sqrt{n}\right)$
and $\Var d_{U}\left(\boldsymbol{x}\right)=O\left(n\right)$, so by
Chebyshev's inequality, for sufficiently large $Q$ we have $\left|d_{U}\left(\boldsymbol{x}\right)-d/2\right|\le Q\sqrt{n}$
with probability at least $1-\gamma/200$. By Markov's inequality,
with probability at least 0.995 there is a set $Y$ with at least
$\left(1-\gamma\right)\left|X\right|$ elements of $X$ satisfying
$d_{U}\left(\boldsymbol{x}\right)=d/2+O\left(\sqrt{n}\right)$. For sufficiently large $Q$, this means that for each $x\in X$, $d_{U}\left(\boldsymbol{x}\right)$ lies in the interval between $d/2-Q\sqrt n$ and $d/2+Q\sqrt n$.

With probability at least 0.99 both the above events occur, and we
can take $X_{k,i}=Y_{k,i}\cap Y$ for a $\left(1-\beta/\left(2M\right)\right)$
proportion of possibilities of $i$. This proves the first part of
the lemma.

Finally we prove the second part. Note that $e_{k,0}$ is a translation of $\sum_{u\in U}d_{Z_{k,0}}\left(u\right)$,
which is of $\left(O\left(n\right),1/2\right)$-Littlewood\textendash Offord
type, with all coefficients $O\left(\sqrt{n}\right)$. So, $\Var e_{k,0}=O\left(n^{2}\right)$
and the desired result follows from Chebyshev's inequality for sufficiently
large $Q$ (note that enlarging $Q$ cannot make the first part fail to hold).

\end{proof}

\section{Concluding remarks}

In this paper we proved that for any fixed $C$, if $G$ is an $n$-vertex
graph with no homogeneous subgraph on $C\log n$ vertices, then $G$
induces subgraphs of $\Omega\left(n^{2}\right)$ different sizes. This is best possible, but there are a number of other related questions one could ask about Ramsey graphs. For example, as proposed to us by Tuan Tran, we could ask for $\Omega\left(n^{3}\right)$ induced subgraphs with different numbers of triangles. The methods in this paper might be helpful for this question, but the main obstacle seems to be that one would want a fairly strong anticoncentration inequality for quadratic polynomials in place of \cref{lem:ELO}.

Actually, we think it would be interesting in general to explore the extent to which anticoncentration phenomena occur in random subsets of Ramsey graphs. For example, consider the following problem. Let $A$ be the adjacency matrix of an $O(1)$-Ramsey graph $G$ and let $x\in \{0,1\}^n$ be a uniformly random 0-1 vector, so that $x^T A x$ is the number of edges in a uniformly random induced subgraph of $G$. Is it true that $\Pr(x^T A x=c)=O(1/n)$ for all $c\in \mathbb Z$? This is closely related to a conjecture of Costello (\cite[Conjecture 3]{Cos13}), essentially characterising the matrices $A$ for which this approximately holds.

Another interesting further direction
of research would be to consider the situation where larger homogeneous
subgraphs are forbidden (see \cite{AB89,AB07,AKS03,NT17} for some
examples of theorems of this type). In particular, a natural weakening of an ambitious conjecture of Alon,
Krivelevich and Sudakov \cite{AKS03} is that if $G$ is an $n$-vertex
graph with no homogeneous subgraph on $n/4$ vertices, then this is
already enough for $G$ to induce subgraphs of $\Omega\left(e\left(G\right)\right)$
different sizes.

{\bf Acknowledgements.} We would like to thank Tuan Tran for helpful comments. We would also like to thank Mantas Baksys and Xuanang Chen for carefully reading the paper and finding an oversight in the proof (related to the definition of richness in \cref{sec:tools}).

\end{document}